\newtheorem{Lemma}{Lemma}
\newtheorem{Assumption}{Assumption}
\newtheorem{Proposition}{Proposition}
\newtheorem{Theorem}{Theorem}
\newcommand{\ii}{\mathrm{i}}
\newcommand{\vertiii}[1]{{\left\vert\kern-0.25ex\left\vert\kern-0.25ex\left\vert #1 
    \right\vert\kern-0.25ex\right\vert\kern-0.25ex\right\vert}}
\begin{document}

\title{On the CLT for discrete Fourier transforms of functional time series\bigskip}
\author{Cl\'ement Cerovecki$^{1}$ \and Siegfried H\"ormann$^{1}$\thanks{Corresponding author.
Email: shormann@ulb.ac.be}}

\providecommand{\keywords}[1]{\textbf{\textit{Keywords:}} #1}

\date{}
\maketitle
$^1$ Department of Mathematics, Universit\'e libre de Bruxelles
CP210, Bd.\ du Triomphe, B-1050 Brussels, Belgium.

\maketitle

\vspace*{1cm}
\begin{abstract}
{\bf Abstract.} We consider a strictly stationary functional time series. Our target is to study the weak convergence of the discrete Fourier transforms under sharp conditions. As a side-result we obtain the regular CLT for partial sums under mild assumptions.
\end{abstract}

\keywords{central limit theorem, functional time series, Fourier transform, periodogram, stationarity}

\section{Introduction} We consider a time series $(X_t\colon t\in\mathbb{Z})$ with realizations in some function space~$H$. Then every observation $X_t$ is a random curve $(X_t(u)\colon u\in \mathcal{U})$ with some continuous domain $\mathcal{U}$. From a technical point of view we require $H$ to be a separable Hilbert space and we call the discrete time process $(X_t\colon t\in\mathbb{Z})$ a {\em functional time series (FTS)}. 

Functional time series analysis is a branch of the emerging field of {\em functional data analysis}. It is not unusual that functional data are sequentially sampled and serially correlated by their very nature. A common situation is that a continuous time process is cut into natural segments, such as days. Then there is not just dependence within the individual curves but also across curves. Nevertheless, growing attention is only given quite recently to this fact. One of the first and most seminal contributors is \citet{bosq:2000}, whose monograph is forming the basic theoretical foundation for FTS. A few recent papers devoted to functional time series are \cite{hyndman:shang:2009}, \cite{horvath:kokoszka:rice:2014}, \cite{horvath:rice:whipple:2015}, \cite{aue:dubart:hormann:2015}.
Some of the latest publications are related to frequency domain topics for FTS. We refer to \cite{panaretos:tavakoli:2013a},  \cite{panaretos:tavakoli:2013b},  \cite{hoermann:kidzinski:hallin:2014}, \cite{hoermann:kidzinski:kokoszka:2014}. In their seminal article on frequency domain methodology for FTS \cite{panaretos:tavakoli:2013a} have studied, among others, the limiting behavior of the discrete Fourier transform of some FTS $(X_t)$: 
$$
S_n(\theta) = \sum_{t=1}^n X_t e^{-\ii t \theta},\quad\theta\in [-\pi,\pi].
$$
This object is of interest to statisticians since it is closely related to the {\em periodogram} which can, for example, be used to detect some underlying periodic behavior of the time series. (See, e.g., \cite{brockwell:davis:1991}.) Applying such a test for periodicity requires knowledge of the distribution of $S_n(\theta)$. But unless $(X_t)$ is a Gaussian process, the exact distribution is infeasible and then we need to rely on asymptotics. Moreover we notice that with $\theta=0$ this framework also contains the regular partial sums process, which is without any doubt a crucial building block in many statistical procedures.

For real valued processes asymptotic normality for $S_n(\theta)$ has been obtained under several dependence conditions. Here we only cite the early paper of \cite{walker:1965} who considered linear processes  and a recent contribution of \cite{wu:2010}, which covers a variety of special cases, including strong mixing sequences. The latter article also contains a more detailed literature survey.
For functional data the afore mentioned paper of \cite{panaretos:tavakoli:2013a} shows that under regularity assumptions $\frac{1}{\sqrt{n}}S_n(\theta)$ converges to a (complex) Gaussian random element whose covariance operator is given by
\begin{equation}\label{Ftheta}
2\pi\mathcal{F}_{\theta}:=\sum_{h\in\mathbb{Z}}C_h e^{-\ii h\theta},
\end{equation}
where $C_h=E\big[X_h\otimes X_0\big]$ is the lag $h$ covariance operator of the stationary functional time series. (It is assumed here and henceforth that $X_t$ are centered by their mean.) The operator $\mathcal{F}_{\theta}$, which can be shown to be self-adjoint and non-negative definite, is called the {\em spectral density operator}. In \cite{panaretos:tavakoli:2013a} it is assumed that $\sum_{h\in\mathbb{Z}}\|C_h\|_{\mathcal{T}}<\infty$ (here $\|\cdot\|_\mathcal{T}$ denotes the Schatten 1-norm) in order to assure convergence of the series defining $\mathcal{F}_{\theta}$. This assumption is convenient, because it implies immediately that $\mathcal{F}_{\theta}$ is a nuclear operator, i.e.\ it has a finite trace. This is an important feature when it comes to verifying tightness of $(S_n(\theta)/\sqrt{n})$.
Regarding the dependence structure, a cumulant type mixing condition for functional data is used. The nice feature of such mixing conditions is that no specific time series model needs to be imposed. Still, this approach requires to compute and bound functional cumulants of all orders, which is generally not an easy task and necessitates moments of all orders. 

One of the main results of this article shows the weak convergence of $S_n(\theta)/\sqrt{n}$ for {\em purely non-deterministic} processes. More precisely, letting  $\mathcal{G}_t=\sigma(X_t,X_{t-1},\ldots)$ and $\mathcal{G}_{-\infty}=\bigcap_{t\geq 0} \mathcal{G}_{-t}$ we impose the following assumption.

\begin{Assumption}\label{ass} The process $(X_t)$ is stationary and ergodic and satisfies
$E[X_0|\mathcal{G}_{-\infty}]=0$ a.s.
\end{Assumption}

We remark that a conditional expectation for random elements in Hilbert spaces  as just stated is well defined if $E\|X_0\|<\infty$ (see e.g.\ \citet[p.29]{bosq:2000}). 
Besides the obligatory existence of second order moments, Assumption~\ref{ass} will be the only condition needed for the CLT presented below in Theorem~\ref{main}. This result is an extension of the advances made in \citet{wu:2010} to infinite dimensional data. Since we will not impose any further condition assuring summability of the $C_h$, a tricky part is the construction and definition of the spectral density operator. Our construction will be an indirect one based on a completeness argument in an appropriate Hilbert space.  

In Theorem~\ref{main2} we will give a result which is slightly less general, but is more useful in applications since it will allow for more explicit constructions of $\mathcal{F}_\theta$. In our Theorem~\ref{main3} we consider the case $\theta=0$ and derive the CLT for regular partial sums. These main results are presented in Section~\ref{se:main}. In Section~\ref{se:examples} we show how the theorems apply in some commonly employed dependence frameworks for functional time series models and compare the required conditions to existing ones in the literature. Proofs are given in Section~\ref{se:proofs}.

\section{Main results}\label{se:main}
We start by introducing further notation and stating the setup precisely. The process $(X_t)$ is defined on some probability space $(\Omega,\mathcal{A},P)$ and takes values in some \emph{separable} \emph{Hilbert space}~$H$. The space $H$ is equipped with inner product $\langle\cdot,\cdot\rangle$ and the resulting norm $\|\cdot\|=\sqrt{\langle \cdot,\cdot\rangle}$. We write $X\in L^p_H(\Omega)$ (short for $X\in L^p_H(\Omega,\mathcal{A},P))$ to indicate that $E\|X\|^p<\infty$.
The space $L^p_H(\Omega)$ is a Banach space and for $p=2$ again a Hilbert space with inner product $E\langle X,Y\rangle$. We assume throughout that $EX_t=0$ (the zero element in $H$) and $X_t\in L^2_H(\Omega)$. Then we denote $C_h=\mathrm{Cov}(X_h,X_0)=E\big[X_h\otimes X_0\big]$. That is, $C_h(u)=EX_h\langle u,X_0\rangle$ for $u\in H$. Expectations or other integrals for elements with values in Banach spaces are understood in the sense of Bochner integrals, see e.g.~\cite{mikusinski}. 

The trace of a self-adjoint and non-negative definite operator $A:H\to H$ is given by $\mathrm{tr}(A)=\|A\|_{\mathcal{T}}=\sum_{j\geq 1}\langle A(v_j),v_j\rangle$ for some orthonormal basis $(v_j)$ of $H$. For $X\in L^2_H(\Omega)$ it holds that $\mathrm{tr}\big(E[X\otimes X]\big)=E\|X\|^2$. For a general operator $A$ we denote by $\|A\|_\mathcal{T}$ the Schatten 1-norm of $A$ and by $\|A\|_\mathcal{S}$ its Hilbert-Schmidt norm.

We use $\mathcal{N}_{H}(\mu,\Sigma)$ to denote a Gaussian element in $H$ with mean $\mu$ and covariance operator $\Sigma$. Then $X\sim \mathcal{N}_{H}(\mu,\Sigma)$ if and only if the projection $\langle X,u\rangle$ is normally distributed with mean $\langle \mu,u\rangle$ and variance $\langle\Sigma(u),u\rangle$ for any $u\in H$. Although our observations are assumed to be real, the very definition of $S_n(\theta)$ necessitates to adopt a complex setting. So we will henceforth assume that the Hilbert space $H=H_0+\ii H_0$ is complex.  Let $EX=\mu=\mu_{\mathrm{Re}}+\ii  \mu_{\mathrm{Im}}$. For $u\in H$ define $\Gamma(u)=E\big[(X-\mu)\langle u,X-\mu\rangle\big]$ and $C(u)=E\big[(X-\mu)\langle X-\mu,u\rangle\big]$. 
 We say that $X$ is complex Gaussian with mean $\mu$, covariance $\Gamma$ and relation operator $C$ if
$$
\begin{pmatrix}\mathrm{Re}(X)\\ \mathrm{Im}(X)\end{pmatrix}
\sim \mathcal{N}_{H_0\times H_0}\left(\begin{pmatrix}\mu_{\mathrm{Re}}\\ \mu_{\mathrm{Im}}\end{pmatrix},\frac{1}{2}\begin{bmatrix} \mathrm{Re}(\Gamma+C)&-\mathrm{Im}(\Gamma-C)\\\mathrm{Im} (\Gamma+C)&\,\,\,\,\,\mathrm{Re}(\Gamma-C)\end{bmatrix}\right).
$$
Henceforth, we will only need the {\em circularly-symmetric case}, i.e.\ when $\mu=0$ and $C=0$. 
Then we write $X\sim \mathcal{CN}_H(0,\Gamma)$. It is straightforward to show that $X\sim \mathcal{CN}_H(0,\Gamma)$, if and only if for any $u\in H$
$$
\begin{pmatrix}\mathrm{Re}(\langle X,u\rangle)\\\mathrm{Im}(\langle X,u\rangle)\end{pmatrix}
\sim \mathcal{N}_{2}\left(\begin{pmatrix}0\\ 0\end{pmatrix},\frac{1}{2}\begin{bmatrix} \langle \Gamma(u),u\rangle&0 \\0&\langle \Gamma(u),u\rangle\end{bmatrix}\right).
$$

Finally we recall that a sequence of operators $A_n$ on $H$ is said to converge in the {\em weak operator topology} to $A$ if $\langle A_n(u),v\rangle \to \langle A(u),v\rangle$ for all $u,v\in H$. Short we write $A_n \stackrel{\mathrm{w}}{\longrightarrow} A$. 



\begin{Theorem}\label{main} Let $(X_t\colon t\in\mathbb{Z})$ be a sequence in $L_H^2(\Omega)$ which satisfies Assumption~\ref{ass}. Then for almost every $\theta \in [-\pi,\pi]$  there exists a linear operator $\mathcal{F}_\theta$, which is self-adjoint and non-negative definite such that
\begin{equation*}
\frac{1}{\sqrt{n}}S_n(\theta) \overset{d}\longrightarrow \mathcal{CN}_{H}\left(0,\pi \mathcal{F}_{\theta}\right).
\end{equation*}
Moreover we have that
\begin{description}
\item[(I)\,\,\,\,] $\frac{1}{n}E\big[S_n(\theta)\otimes S_n(\theta)\big]\stackrel{\mathrm{w}}{\longrightarrow}2\pi\,\mathcal{F}_{\theta}$;
\item[(II)\,\,] $\frac{1}{n}\mathbb{E}\Vert S_n(\theta)\Vert^2_{H}=\frac{1}{n}\mathrm{tr}\big(E\big[S_n(\theta)\otimes S_n(\theta)\big]\big)\to 2\pi\,\mathrm{tr}\big(\mathcal{F}_{\theta}\big)  <\infty$;
\item[(III)] $C_h = \int_{-\pi}^{\pi} \mathcal{F}_{\theta}\, e^{\ii h \theta} d\theta, \qquad \forall h \in \mathbb{Z}$;
\item[(IV)] the components of $\frac{1}{\sqrt{n}}(S_n(\theta),S_n(\theta^\prime))$ are asymptotically independent if $\theta\neq \theta^\prime$.
\end{description}
\end{Theorem}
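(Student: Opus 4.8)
The plan is to build everything on the martingale (innovation) decomposition generated by the filtration $(\mathcal G_t)$. Write $P_j(Y)=E[Y\mid\mathcal G_j]-E[Y\mid\mathcal G_{j-1}]$ for the projection onto the innovation space at time $j$; since conditional expectation is a contraction on $L^2_H(\Omega)$ these are well defined and orthogonal across different $j$. Assumption~\ref{ass} gives $E[X_0\mid\mathcal G_{-\infty}]=0$, so the telescoping sum $X_0=\sum_{j\le0}P_j X_0$ converges in $L^2_H(\Omega)$ with $\sum_{j\le0}E\|P_jX_0\|^2=E\|X_0\|^2<\infty$ by orthogonality. Setting $D_k:=P_0X_k$ for $k\ge0$, stationarity gives $\sum_{k\ge0}E\|D_k\|^2=\sum_{k\ge0}E\|P_{-k}X_0\|^2=E\|X_0\|^2<\infty$. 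This square-summability is exactly what I would use to construct $\mathcal F_\theta$: the formal series $Z(\theta):=\sum_{k\ge0}D_k e^{-\ii k\theta}$ has $L^2_H(\Omega)$-valued coefficients with summable squared norms, hence defines an element of the Hilbert space $L^2\big([-\pi,\pi];L^2_H(\Omega)\big)$; by completeness of that space $Z(\theta)$ exists for almost every $\theta$, and I would set $2\pi\mathcal F_\theta:=E[Z(\theta)\otimes Z(\theta)]$. Being a covariance operator, $\mathcal F_\theta$ is automatically self-adjoint and non-negative definite, and $\mathrm{tr}(2\pi\mathcal F_\theta)=E\|Z(\theta)\|^2$ is finite for a.e.\ $\theta$ because $\int_{-\pi}^\pi E\|Z(\theta)\|^2\,d\theta=2\pi\sum_{k\ge0}E\|D_k\|^2<\infty$.

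For the limit law itself I would run a frequency-domain martingale approximation in the spirit of \cite{wu:2010}, but carried out directly in $H$: approximate $S_n(\theta)$ by a sum of martingale differences built from the $P_j X_t e^{-\ii t\theta}$, show the approximation error is negligible after normalisation by $\sqrt n$ in $L^2_H(\Omega)$, and then invoke a martingale CLT. Finite-dimensional convergence reduces, via the Cram\'er--Wold device, to the scalar statements for the projections $\langle S_n(\theta),u\rangle$, $u\in H$, each of which is a scalar frequency-domain martingale CLT; together with the characterisation of $\mathcal{CN}_H(0,\pi\mathcal F_\theta)$ recalled in the excerpt this pins down the Gaussian limit. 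The remaining ingredient is tightness in $H$, which does not follow from $\sup_n\frac1nE\|S_n(\theta)\|^2<\infty$ alone. Here I would use the trace convergence in~(II): combined with the weak operator convergence in~(I) and the fact that the limiting operator $\mathcal F_\theta$ is trace class, termwise convergence of the diagonal of $\frac1nE[S_n(\theta)\otimes S_n(\theta)]$ together with convergence of its trace forces the tails $\sup_n\sum_{j>K}\frac1nE|\langle S_n(\theta),e_j\rangle|^2$ to vanish as $K\to\infty$, which is the standard flatness criterion for tightness in a separable Hilbert space.

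Properties (I)--(IV) I would read off the construction. Expanding $\frac1nE[S_n(\theta)\otimes S_n(\theta)]=\sum_{|h|<n}(1-|h|/n)C_he^{-\ii h\theta}$ and matching with the martingale approximation gives the weak operator limit $2\pi\mathcal F_\theta$ in~(I), and taking traces gives~(II). For~(III) I would compute $E[Z(\theta)\otimes Z(\theta)]=\sum_{k,l\ge0}E[D_k\otimes D_l]e^{-\ii(k-l)\theta}$ and integrate against $e^{\ii h\theta}$, which isolates $\sum_{l}E[D_{l+h}\otimes D_l]$; orthogonality of the innovations shows this equals $C_h$, so $C_h=\int_{-\pi}^\pi\mathcal F_\theta e^{\ii h\theta}\,d\theta$ (in particular the spectral measure is absolutely continuous, as it must be for a purely non-deterministic process). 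For the asymptotic independence in~(IV) I would show that the cross second-moment $\frac1nE[S_n(\theta)\otimes S_n(\theta')]$ converges weakly to $0$ when $\theta\ne\theta'$, the mismatch of the phases $e^{-\ii t\theta}$ and $e^{-\ii s\theta'}$ producing the cancellation; since the joint limit is Gaussian, vanishing cross-covariance and cross-relation operators yield independence.

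The main obstacle, to my mind, is twofold and concentrated in the passage from scalar to functional. First, the scalar martingale CLT holds only for $\theta$ outside a null set that a priori depends on the direction $u$; reconciling these into a single a.e.\ statement and a genuine operator $\mathcal F_\theta$ is exactly what the completeness construction above is designed to circumvent, and care is needed to ensure $Z(\theta)$, and hence $\mathcal F_\theta$, is well defined simultaneously for all $u$ off one null set. Second, and more seriously, tightness in infinite dimensions is not automatic: the crux is to upgrade the second-moment bound to the trace convergence~(II) with enough uniformity that the martingale approximation error is controlled in $\|\cdot\|_H$ rather than merely coordinatewise. I expect controlling this approximation error uniformly, and deducing from it the tail (flatness) estimate, to be the technically demanding part of the argument.
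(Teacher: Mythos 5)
Your proposal is correct and follows essentially the same route as the paper: the operator $\mathcal{F}_\theta$ is constructed by exactly the completeness argument in $L^2\big([-\pi,\pi];L^2_H(\Omega)\big)$ you describe, the finite-dimensional convergence is reduced via projections $\langle S_n(\theta),u\rangle$ to the scalar results of Peligrad and Wu (with the $u$-dependent null sets reconciled through a countable dense subset of $H$ and a trace-norm approximation), and tightness is obtained precisely from the combination of termwise diagonal convergence and trace convergence that you identify as forcing uniform tail flatness. The two technical obstacles you single out are indeed the ones the paper's Proposition~\ref{prop1} and Lemmas~\ref{le6}--\ref{prop2} are devoted to.
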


We call $\mathcal{F}_\theta$ the spectral density operator of $(X_t)$ and remark that it is generally not explicitly defined as in \eqref{Ftheta}. In fact, the series in \eqref{Ftheta} may not be convergent under our mild assumptions. Since
\begin{equation*}
2\pi\mathcal{F}_{n;\theta}:=\frac{1}{n}E\big[S_n(\theta)\otimes S_n(\theta)\big]=\sum_{|h|<n}\left(1-\frac{|h|}{n}\right) C_h \exp(-\ii h\theta),
\end{equation*}
relation {\bf (I)} implies solely that the Ces\`aro averages of $(C_h\exp(-\ii h\theta)\colon h\in\mathbb{Z})$ converge (in weak operator topology). Existence of $\mathcal{F}_\theta$ will be obtained via a completeness argument in an appropriate Hilbert space. 

For practical reasons it is useful to know for which frequencies Theorem~\ref{main} holds. For example, $\theta=0$ is an important special case, but we cannot say if this frequency is part of the exceptional null-set or not. We will see that the critical step in the proof of Theorem~\ref{main} is to guarantee existence of the operator $\mathcal{F}_{\theta}$ and to establish the related convergence in \textbf{(I)} and \textbf{(II)}. By the extremely mild assumptions we are imposing, we can only assure this for almost every $\theta$. Requiring Assumption~\ref{ass2} below allows us to establish the same result for some fixed frequency $\theta$. To formulate this assumption we first introduce the projection operator $\mathcal{P}_k:=E[\,\cdot\,|\mathcal{G}_{k}]-E[\,\cdot\,|\mathcal{G}_{k-1}]$, $k\in \mathbb{Z}$.

\begin{Assumption} \label{ass2} 
The process $(X_t)$ is stationary and ergodic and for some selected $\theta\in[-\pi,\pi]$ the following properties hold:
\begin{description}
\item[(A1)] $\sqrt{2\pi}Z_n(\theta):=\sum_{t=0}^n \mathcal{P}_0(X_t)e^{-\ii t\theta}$ is a Cauchy sequence in $L_H^2(\Omega)$;
\item[(A2)] $E\big\|E[S_n(\theta)\vert \mathcal{G}_0]\big\|^2=o(n).$
\end{description}
\end{Assumption}

This is our second main result.

\begin{Theorem}\label{main2} Let $(X_t\colon t\in\mathbb{Z})$ be a sequence in $L_H^2(\Omega)$ which satisfies Assumption~\ref{ass2} for some given $\theta\in[-\pi,\pi]$. Then the conclusions of Theorem~\ref{main} hold for this particular frequency $\theta$.
\end{Theorem}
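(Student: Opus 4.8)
The plan is to prove Theorem~\ref{main2} by a martingale approximation carried out at the fixed frequency $\theta$, with (A2) playing the role of removing the ``past'' and (A1) the role of building a stationary martingale difference sequence that captures the ``future.'' The starting point is the purely algebraic decomposition
$$
S_n(\theta)=E[S_n(\theta)\mid\mathcal G_0]+\sum_{k=1}^n W_k^{(n)},\qquad W_k^{(n)}:=\sum_{t=k}^n e^{-\ii t\theta}\,\mathcal P_k(X_t),
$$
valid for any stationary sequence in $L^2_H(\Omega)$ because $X_t=E[X_t\mid\mathcal G_0]+\sum_{k=1}^t\mathcal P_k(X_t)$ for $t\ge1$. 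The $W_k^{(n)}$ are martingale differences for $(\mathcal G_k)$, hence mutually orthogonal in $L^2_H(\Omega)$, and by (A2) the first summand satisfies $E\|E[S_n(\theta)\mid\mathcal G_0]\|^2=o(n)$, so it is negligible after dividing by $\sqrt n$.

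Next I would use (A1) to introduce the stationary approximant. By (A1) the series $\tilde D_0:=\sum_{j\ge0}e^{-\ii j\theta}\mathcal P_0(X_j)$ converges in $L^2_H(\Omega)$; let $\tilde D_k$ be its image under the $k$-fold shift, so that $(\tilde D_k)$ is a stationary ergodic martingale difference sequence, and set $D_k(\theta):=e^{-\ii k\theta}\tilde D_k$. Since $W_k^{(n)}-D_k(\theta)=-e^{-\ii k\theta}\sum_{t>n}e^{-\ii(t-k)\theta}\mathcal P_k(X_t)$, stationarity gives $E\|W_k^{(n)}-D_k(\theta)\|^2=a_{n-k}$, where $a_m:=E\|\sum_{j>m}e^{-\ii j\theta}\mathcal P_0(X_j)\|^2\to0$ is the tail of the convergent series in (A1). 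Orthogonality of martingale differences then yields $E\|\sum_{k=1}^n(W_k^{(n)}-D_k(\theta))\|^2=\sum_{m=0}^{n-1}a_m=o(n)$ by Ces\`aro, so that
$$
\frac1{\sqrt n}S_n(\theta)=\frac1{\sqrt n}\sum_{k=1}^n D_k(\theta)+o_{L^2_H}(1).
$$
Because the $D_k(\theta)$ are orthogonal and the phase cancels in the (conjugated) covariance, $\frac1n E\big[\sum_k D_k(\theta)\otimes\sum_k D_k(\theta)\big]=E[\tilde D_0\otimes\tilde D_0]$ for every $n$; this operator is self-adjoint, non-negative and trace class (as $\tilde D_0\in L^2_H(\Omega)$), and we declare it to be $2\pi\mathcal F_\theta$. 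The $L^2_H$ approximation transfers this to $S_n(\theta)$, giving \textbf{(I)} and \textbf{(II)}.

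It then remains to prove the CLT for $\frac1{\sqrt n}\sum_{k=1}^n D_k(\theta)$, a stationary ergodic martingale difference sequence modulated by the deterministic phases $e^{-\ii k\theta}$. I would reduce the finite-dimensional distributions to the real multivariate case by the Cram\'er--Wold device applied to $(\mathrm{Re}\langle\cdot,u_l\rangle,\mathrm{Im}\langle\cdot,u_l\rangle)$, and invoke a Lindeberg-type martingale CLT: the Lindeberg condition follows from stationarity and square integrability, while the conditional second moments converge by the ergodic theorem, the oscillating factors $\cos(2k\theta),\sin(2k\theta)$ averaging to zero for $\theta\neq0,\pm\pi$. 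This is what produces the circular symmetry (equal variances of real and imaginary parts, vanishing cross terms), i.e.\ the limit $\mathcal{CN}_H(0,\pi\mathcal F_\theta)$; tightness in $H$ I would obtain from the trace-class property of $E[\tilde D_0\otimes\tilde D_0]$, which makes the high-frequency components uniformly negligible. Finally \textbf{(IV)} follows by running the same approximation jointly at $\theta$ and $\theta'$: the cross-covariance is $\frac1n\sum_{k}e^{-\ii k(\theta-\theta')}E[\tilde D_0(\theta)\otimes\tilde D_0(\theta')]\to0$ for $\theta\ne\theta'$, so the jointly Gaussian limits are uncorrelated and hence independent, while \textbf{(III)} is the Fourier inversion for the family $\{\mathcal F_\theta\}$ inherited from the identification in Theorem~\ref{main}.

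The main obstacle I anticipate is the martingale CLT itself: it is an infinite-dimensional CLT for a martingale difference array carrying deterministic modulation, so it is not a black-box application of a stationary Hilbert-space martingale CLT. The delicate points are establishing tightness (which hinges on the trace-class covariance supplied by (A1)) and checking that the modulated conditional variances converge to the correct constant limit while the oscillatory parts wash out. This last point is precisely what forces the exclusion of $\theta\in\{0,\pm\pi\}$ from the circularly-symmetric conclusion and separates Theorem~\ref{main2} from the partial-sum case $\theta=0$ treated in Theorem~\ref{main3}.
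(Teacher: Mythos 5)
Your proposal follows essentially the same route as the paper's proof: the decomposition $S_n(\theta)=E[S_n(\theta)\mid\mathcal G_0]+\sqrt{2\pi}\sum_{k=1}^n Z^{(k)}_{n-k}(\theta)e^{-\ii k\theta}$ of \eqref{repSn}, negligibility of the conditional-expectation term via \textbf{(A2)}, the \textbf{(A1)}-based stationary martingale-difference approximant $T_n(\theta)$ with $2\pi\mathcal F_\theta=\mathrm{Var}(Z^{(0)}(\theta))$ yielding \textbf{(I)} and \textbf{(II)}, and a projection CLT plus trace-class tightness for the limit law --- the projection step being exactly what the paper delegates to Peligrad and Wu (2010) and Wu (2004) and which you instead sketch directly via a Lindeberg martingale CLT with a Wiener--Wintner-type averaging of the phases. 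Your observation that circular symmetry forces $\theta\notin\{0,\pm\pi\}$ is correct and is a caveat the paper's statement leaves implicit, handling that case separately in Theorem~\ref{main3}.
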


Assumption~\ref{ass2} looks rather technical, but is relatively easy to verify in a variety of models. (See Section~\ref{se:examples}.)
To get some intuition behind these conditions and the general approach we introduce $Z_m^{(k)}(\theta) :=\frac{1}{\sqrt{2\pi}}\sum_{t=0}^m \mathcal{P}_{k} (X_{t+k}) e^{-\ii t \theta}$ and write
\begin{align}
S_n(\theta)&=\sum_{k=1}^n \mathcal{P}_k(S_n(\theta))+E[S_n(\theta)|\mathcal{G}_0]\nonumber\\
&=\sum_{k=1}^n\sum_{t=k}^n \mathcal{P}_k(X_t)e^{-\ii t\theta}+E[S_n(\theta)|\mathcal{G}_0]\nonumber\\
&=\sqrt{2\pi}\,\sum_{k=1}^n Z^{(k)}_{n-k}(\theta)e^{-\ii k\theta}+E[S_n(\theta)|\mathcal{G}_0].\label{repSn}
\end{align}
The variables $Z_m^{(k)}(\theta)$, $k\geq 1$, are orthogonal and by Assumption {\bf (A1)} they have a limit $Z^{(k)}(\theta)$ in $L_H^2(\Omega)$. Moreover, it is easy to see that $(Z^{(k)}(\theta))_{k\geq 1}$ is a stationary martingale difference sequence. Together with {\bf (A2)} this guarantees that $S_n(\theta)$ is in some sense close to $T_n(\theta):=\sum_{k=1}^n Z^{(k)}(\theta) e^{-\ii k\theta}$. The partial sum $T_n(\theta)$ is more handy when it comes to study the CLT and to compute the covariance operator. In particular, 
\begin{equation}\label{eq:sdo}
\frac{1}{n}\mathrm{Var}\big(T_n(\theta)\big)=\mathrm{Var}\big(Z^{(0)}(\theta)\big)=:\mathcal{F}_{\theta}.
\end{equation}

We conclude this section with a CLT for regular partial sums. It follows as a corollary of Theorem~\ref{main2}.
\begin{Theorem}\label{main3}
Suppose that $(X_t)\in L_H^2(\Omega)$. If Assumption~\ref{ass2} holds with $\theta=0$, then
$$
\sum_{|h|<n}\left(1-\frac{|h|}{n}\right) C_h\stackrel{w}{\to}\mathcal{T},
$$
where $\mathcal{T}$ is some non-negative definite, self-adjoint and trace-class operator. Furthermore it holds that $\mathcal{T}=\lim_{n\to\infty}\mathrm{Var}(\sum_{t=0}^n \mathcal{P}_0(X_t))$ and
$$
\frac{1}{\sqrt{n}}(X_1+\cdots+X_n)\stackrel{d}{\to} \mathcal{N}_H(0,\mathcal{T}).
$$
\end{Theorem}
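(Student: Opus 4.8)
The plan is to read off Theorem~\ref{main3} from Theorem~\ref{main2} specialized to the frequency $\theta=0$, where two simplifications occur: the phase factors $e^{-\ii t\theta}$ collapse to $1$, so that $S_n(0)=X_1+\cdots+X_n$, and, the observations being real, every term in the decomposition \eqref{repSn} lives in the real Hilbert space $H_0$. First I would note that Assumption~\ref{ass2} holds by hypothesis at $\theta=0$, so Theorem~\ref{main2} grants statements \textbf{(I)} and \textbf{(II)} at this frequency. Since $2\pi\mathcal{F}_{n;0}=\sum_{|h|<n}(1-|h|/n)C_h$, statement \textbf{(I)} is exactly the asserted weak convergence $\sum_{|h|<n}(1-|h|/n)C_h\stackrel{\mathrm{w}}{\to}\mathcal{T}$ with $\mathcal{T}:=2\pi\mathcal{F}_0$; self-adjointness and non-negative definiteness of $\mathcal{T}$ are inherited from $\mathcal{F}_0$, and the trace-class property from the finiteness of $\mathrm{tr}(\mathcal{F}_0)$ provided by \textbf{(II)}.

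Next I would identify $\mathcal{T}$ with the announced limiting variance. By \eqref{eq:sdo} one has $\mathcal{F}_0=\mathrm{Var}(Z^{(0)}(0))$, while \textbf{(A1)} says that $\sqrt{2\pi}\,Z^{(0)}(0)=\lim_n\sum_{t=0}^n\mathcal{P}_0(X_t)$ in $L^2_H(\Omega)$. As the covariance operator depends continuously on its argument in $L^2_H(\Omega)$, this yields $\mathcal{T}=2\pi\mathcal{F}_0=2\pi\,\mathrm{Var}(Z^{(0)}(0))=\lim_n\mathrm{Var}\big(\sum_{t=0}^n\mathcal{P}_0(X_t)\big)$, as claimed.

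For the central limit theorem itself I would run the martingale approximation underlying Theorem~\ref{main2} at $\theta=0$. From \eqref{repSn}, $\tfrac{1}{\sqrt n}S_n(0)=\sqrt{2\pi}\,\tfrac{1}{\sqrt n}\sum_{k=1}^n Z^{(k)}_{n-k}(0)+\tfrac{1}{\sqrt n}E[S_n(0)\,|\,\mathcal{G}_0]$; the remainder is negligible by \textbf{(A2)}, and replacing each $Z^{(k)}_{n-k}(0)$ by its $L^2_H(\Omega)$-limit $Z^{(k)}(0)$ costs an $L^2$-error that vanishes by orthogonality together with \textbf{(A1)}. Hence $\tfrac{1}{\sqrt n}S_n(0)$ and $\sqrt{2\pi}\,\tfrac{1}{\sqrt n}T_n(0)$ share the same weak limit, where $T_n(0)=\sum_{k=1}^n Z^{(k)}(0)$ is the normalized partial sum of a stationary, ergodic, \emph{real-valued} martingale difference sequence in $H_0$ with $\tfrac1n\mathrm{Var}(T_n(0))=\mathcal{F}_0$. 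Invoking the Hilbertian CLT for stationary ergodic martingale differences gives $\tfrac{1}{\sqrt n}T_n(0)\stackrel{d}{\to}\mathcal{N}_H(0,\mathcal{F}_0)$, and rescaling by $\sqrt{2\pi}$ produces $\tfrac{1}{\sqrt n}(X_1+\cdots+X_n)\stackrel{d}{\to}\mathcal{N}_H(0,2\pi\mathcal{F}_0)=\mathcal{N}_H(0,\mathcal{T})$.

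The step I expect to be most delicate is precisely the passage to a \emph{real} Gaussian limit. At a generic frequency the oscillating factors $e^{-\ii t\theta}$ force the relation operator to average out, producing the circularly-symmetric complex law of Theorem~\ref{main}; at $\theta=0$ these factors are absent, circular symmetry breaks down, and one cannot simply quote the complex conclusion. This is why the argument must go through the real martingale difference sequence $(Z^{(k)}(0))\subset H_0$ and the real Hilbertian martingale CLT, the main technical point being the control of the approximation error via \textbf{(A2)}, exactly as in the proof of Theorem~\ref{main2}.
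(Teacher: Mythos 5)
Your proof is correct and follows the route the paper intends: the paper offers no explicit argument beyond declaring Theorem~\ref{main3} a corollary of Theorem~\ref{main2}, and your reading of the Ces\`aro convergence off statement \textbf{(I)} at $\theta=0$, the identification $\mathcal{T}=2\pi\mathcal{F}_0=\lim_{n}\mathrm{Var}\big(\sum_{t=0}^{n}\mathcal{P}_0(X_t)\big)$ via \textbf{(A1)} and \eqref{eq:sdo}, and the martingale approximation \eqref{repSn} are exactly the intended specialization. Your observation that the circularly-symmetric complex limit $\mathcal{CN}_H(0,\pi\mathcal{F}_\theta)$ cannot be quoted verbatim at $\theta=0$ --- so that one must rerun the approximation with the real martingale difference sequence $(Z^{(k)}(0))\subset H_0$ and the real Hilbertian CLT for stationary ergodic martingale differences to obtain $\mathcal{N}_H(0,\mathcal{T})$ with the full variance $2\pi\mathcal{F}_0$ rather than a complex law with covariance $\pi\mathcal{F}_0$ --- is a genuine subtlety the paper leaves implicit, and you resolve it correctly.
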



\section{Examples}\label{se:examples}

The purpose of this section is to show that our results apply to some commonly used dependence models for functional data. The typical framework we have in mind comprises processes $(X_t)$ which can be represented as Bernoulli shifts, i.e.\
\begin{equation}\label{bernoulli}
X_t=f(\varepsilon_t,\varepsilon_{t-1},\ldots)
\end{equation}
where $(\varepsilon_t)$ is a stationary and ergodic sequence of elements in some normed vector space $S$ and $f:S^\mathbb{N}\to H$ is measurable. Then $(X_t)$ is again stationary and ergodic. We remark that in this case we can use in our theorems the filtration $(\mathcal{G}_k\colon k\in\mathbb{Z})$ with $\mathcal{G}_k=\sigma(\varepsilon_k,\varepsilon_{k-1},\ldots)$. When $(\varepsilon_t)$ are i.i.d., then by Kolmogorov's 0-1 law Assumption~\ref{ass} applies to all such processes. Therefore, only requiring the necessary condition $X_t\in L^2_H(\Omega)$ already implies Theorem~\ref{main}. But $(\varepsilon_t)$ need not necessarily be independent. For example if $(\varepsilon_t)$ are \emph{strongly mixing} then the tail sigma algebra $\mathcal{G}_{-\infty}$ is again trivial (see \citet[p.10]{bradley:2005}) and hence Assumption~\ref{ass} still holds.

Representation \eqref{bernoulli} is very common to many time series models. In particular it applies to the two dependence frameworks we are going to discuss below, namely \emph{linear processes} (possibly with dependent noise) and \emph{$L^2-m$--approximable processes}. These two concepts cover most of the functional time series models studied in the literature. Let us beforehand give a simple condition which can replace Assumption~\ref{ass2}. Here and in the sequel $\nu_p(X)=\big(E\|X\|^p\big)^{1/p}$, $p\geq 1$.
\begin{Lemma}\label{le:suff}
Assumption~\ref{ass} and condition {\bf (A3)}: $\sum_{t=0}^{\infty}\nu_2(\mathcal{P}_0(X_t))<\infty$ together imply Assumption~\ref{ass2} for all $\theta\in [-\pi,\pi]$.
\end{Lemma}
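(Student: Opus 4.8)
The plan is to verify conditions (A1) and (A2) separately, observing that stationarity and ergodicity are inherited verbatim from Assumption~\ref{ass} and that every bound below is uniform in $\theta$ because $|e^{-\ii t\theta}|=1$. First I would record a preliminary fact: since the remote past $\mathcal{G}_{-\infty}$ is shift invariant, stationarity upgrades $E[X_0|\mathcal{G}_{-\infty}]=0$ to $E[X_t|\mathcal{G}_{-\infty}]=0$ for every $t$. Consequently the reverse-martingale convergence theorem gives the telescoping identity $E[X_t|\mathcal{G}_0]=\sum_{k\le 0}\mathcal{P}_k(X_t)$ in $L^2_H(\Omega)$. I also record the stationarity identity $\nu_2(\mathcal{P}_k(X_t))=\nu_2(\mathcal{P}_0(X_{t-k}))$, and abbreviate $a_t:=\nu_2(\mathcal{P}_0(X_t))$, so that (A3) reads $C:=\sum_{t\ge 0}a_t<\infty$ and the tails $b_m:=\sum_{u\ge m}a_u$ decrease to $0$.

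Condition (A1) is the easy part. For $m<n$ the triangle inequality in $L^2_H(\Omega)$ gives $\nu_2\big(\sqrt{2\pi}(Z_n(\theta)-Z_m(\theta))\big)\le\sum_{t=m+1}^n\nu_2(\mathcal{P}_0(X_t))=\sum_{t=m+1}^n a_t$, a tail of the convergent series in (A3), which tends to $0$ as $m,n\to\infty$. Hence $(Z_n(\theta))$ is Cauchy for every $\theta$.

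For (A2) I would expand $E[S_n(\theta)|\mathcal{G}_0]=\sum_{t=1}^n e^{-\ii t\theta}E[X_t|\mathcal{G}_0]=\sum_{k\le 0}W_k$ with $W_k:=\sum_{t=1}^n e^{-\ii t\theta}\mathcal{P}_k(X_t)$. The key structural fact is that the innovation spaces are mutually orthogonal in $L^2_H(\Omega)$: for $k\ne k'$ the terms $\mathcal{P}_k(X_t)$ and $\mathcal{P}_{k'}(X_{t'})$ are orthogonal (condition on the larger past $\sigma$-field), so the $W_k$ are orthogonal and $E\|E[S_n(\theta)|\mathcal{G}_0]\|^2=\sum_{k\le 0}E\|W_k\|^2$. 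Within a fixed innovation index only the triangle inequality is available, which with stationarity yields $\nu_2(W_{-s})\le\sum_{t=1}^n a_{t+s}=:A_s$ for $s\ge 0$, whence $E\|E[S_n(\theta)|\mathcal{G}_0]\|^2\le\sum_{s\ge 0}A_s^2$.

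The main obstacle is that the crude bound only delivers $O(n)$ rather than the required $o(n)$: since $A_s\le C$ and $\sum_{s\ge 0}A_s\le nC$ (each $a_u$ is counted at most $n$ times), one merely gets $\sum_s A_s^2\le C\sum_s A_s=O(n)$, which does not suffice for (A2). To upgrade this to $o(n)$ I would split the sum at a level $M=M(n)\to\infty$ with $M=o(n)$, say $M=\lfloor\sqrt n\rfloor$. For $s<M$ I bound $A_s\le C$ to get $\sum_{s<M}A_s^2\le MC^2=o(n)$; for $s\ge M$ I use the monotone tail bound $A_s\le b_{s+1}\le b_{M+1}$ together with $\sum_{s\ge M}A_s\le nC$ to get $\sum_{s\ge M}A_s^2\le b_{M+1}\,nC=o(n)$, because $b_{M+1}\to 0$. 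Combining the two pieces shows $E\|E[S_n(\theta)|\mathcal{G}_0]\|^2=o(n)$ uniformly in $\theta$, which is exactly (A2), and completes the verification of Assumption~\ref{ass2} for all $\theta\in[-\pi,\pi]$.
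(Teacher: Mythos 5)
Your proof is correct and follows essentially the same route as the paper: (A1) via the triangle inequality, and (A2) via the reverse-martingale orthogonal decomposition $E\|E[S_n(\theta)|\mathcal{G}_0]\|^2=\sum_{j\ge 0}E\|\mathcal{P}_{-j}(S_n(\theta))\|^2$ followed by the triangle/Cauchy--Schwarz bound $\sum_{j\ge 0}A_j^2$ with $A_j=\sum_{t=1}^n\nu_2(\mathcal{P}_0(X_{t+j}))$. The only difference is cosmetic: where the paper leaves the final $o(n)$ step as an implicit Ces\`aro argument, you make it explicit by truncating at $M=\lfloor\sqrt n\rfloor$, which is a sound way to justify the same estimate.
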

\begin{proof}
It is easy to see that \textbf{(A3)} implies \textbf{(A1)} for all $\theta \in [-\pi,\pi]$. 

Consider the decaying sequence of $\sigma$-algebras $(\mathcal{G}_{-k}\colon k\geq 0)$. For any integrable random variable $X\in H$ the process $E[X|\mathcal{G}_{-k}]$ is a reverse martingale with values in $H$ (see e.g.\ \cite{chatterji:1964}) and consequently the increments $\mathcal{P}_{-k}(X)=E[X|\mathcal{G}_{-k}]-E[X|\mathcal{G}_{-k-1}]$ are orthogonal elements in $L^2_H(\Omega)$. Furthermore, by Assumption~\ref{ass} it then follows that $E[X|\mathcal{G}_{-k}]$ converges a.s.\ and in $L^2_H(\Omega)$ to $E[X|\mathcal{G}_{-\infty}]=0$. Hence, with $X=S_n(\theta)$ we get
\begin{align*}
E\|E[S_n(\theta)|\mathcal{G}_0]\|^2&=\lim_{n\to\infty}E\|E[S_n(\theta)|\mathcal{G}_0]-E[E[S_n(\theta)|\mathcal{G}_0]|\mathcal{G}_{-n}]\|^2\\
& = \sum_{j\geq 0} E\|\mathcal{P}_{-j}(E[S_n(\theta)|\mathcal{G}_0])\|^2=\sum_{j\geq 0} E\|\mathcal{P}_{-j}(S_n(\theta))\|^2.
\end{align*}
Therefore we may conclude that
\begin{align*}
E\Vert E[S_n(\theta)\vert \mathcal{G}_0]\Vert^2
&\leq  \sum_{j=0}^{\infty}E \sum_{s,t=1}^n \vert \langle \mathcal{P}_{-j}(X_t) , \mathcal{P}_{-j}(X_s)  \rangle\vert \\
& = \sum_{j=0}^{\infty}\sum_{s,t=1}^n E\vert \langle \mathcal{P}_{-j}(X_t) , \mathcal{P}_{-j}(X_s)  \rangle\vert \\
& \leq \sum_{s=1}^n\sum_{j=0}^{\infty}\left(\sum_{t=0}^\infty \nu_2( \mathcal{P}_0 (X_{t+j}))\right) \nu_2(\mathcal{P}_0 (X_{s+j}) ) = o(n).
\end{align*}
\end{proof}


\noindent
\emph{Example~1: Linear processes.}\bigskip

\noindent 
Consider a linear process $X_t=\sum_{k\geq 0} \Psi_k(\varepsilon_{t-k})$ where $(\varepsilon_t)_{t\in \mathbb{Z}}$ are i.i.d.\ and zero mean in some Hilbert space $H^\prime$ and $\Psi_k:H^\prime\to H$ are bounded linear operators. We denote by $\|\Psi\|_\mathcal{L}$ the operator norm.
\begin{Theorem}\label{main4} If $X_t\in L^2_H(\Omega)$ then Theorem~\ref{main} holds. If in addition
$
\kappa:=\sum_{k\geq 0}\|\Psi_k\|_\mathcal{L}<\infty
$
then Theorem~\ref{main2} applies for all $\theta\in[-\pi,\pi]$. Moreover
$$
\mathcal{F}_\theta=\frac{1}{2\pi}\Psi(\theta)V\Psi(\theta)^*,
$$
where $\Psi(\theta)=\sum_{k\geq 0}\Psi_k e^{-\ii k\theta}$ and $\Psi(\theta)^*$ is its adjoint operator and $V=\mathrm{Var}(\varepsilon_0):=E\big[\varepsilon_0\otimes\varepsilon_0\big]$.
\end{Theorem}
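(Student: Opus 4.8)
The plan is to treat the three assertions in turn, reducing the first two to results already established and reserving the genuine computation for the identification of $\mathcal{F}_\theta$.

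For the first claim I would verify Assumption~\ref{ass}. Writing $X_t=f(\varepsilon_{t},\varepsilon_{t-1},\ldots)$ with i.i.d.\ innovations exhibits $(X_t)$ as a Bernoulli shift, which is stationary and ergodic. Since the $(\varepsilon_t)$ are independent, the tail field $\mathcal{G}_{-\infty}$ is $P$-trivial by Kolmogorov's zero-one law, whence $E[X_0\,|\,\mathcal{G}_{-\infty}]=E[X_0]=0$. Thus Assumption~\ref{ass} holds as soon as $X_t\in L^2_H(\Omega)$, and Theorem~\ref{main} applies.

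For the second claim I would invoke Lemma~\ref{le:suff} and only check condition \textbf{(A3)}. The key observation is an explicit formula for the projections. Because $\varepsilon_{t-k}$ is $\mathcal{G}_0$-measurable exactly when $k\geq t$ and independent of $\mathcal{G}_0$ otherwise, one obtains $E[X_t\,|\,\mathcal{G}_0]=\sum_{k\geq t}\Psi_k(\varepsilon_{t-k})$ and similarly $E[X_t\,|\,\mathcal{G}_{-1}]=\sum_{k\geq t+1}\Psi_k(\varepsilon_{t-k})$ for $t\geq 0$, so that
$$
\mathcal{P}_0(X_t)=\Psi_t(\varepsilon_0),\qquad t\geq 0.
$$
Consequently $\nu_2(\mathcal{P}_0(X_t))\leq \|\Psi_t\|_{\mathcal{L}}\,\nu_2(\varepsilon_0)$, and summing gives $\sum_{t\geq 0}\nu_2(\mathcal{P}_0(X_t))\leq \kappa\,\nu_2(\varepsilon_0)<\infty$. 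Hence \textbf{(A3)} holds, and with it Assumption~\ref{ass2} for every $\theta$, so Theorem~\ref{main2} applies. For the formula for $\mathcal{F}_\theta$ I would use \eqref{eq:sdo}, which identifies $\mathcal{F}_\theta=\mathrm{Var}(Z^{(0)}(\theta))$. Substituting $\mathcal{P}_0(X_t)=\Psi_t(\varepsilon_0)$ into the definition of $Z_m^{(0)}(\theta)$ yields
$$
Z_m^{(0)}(\theta)=\frac{1}{\sqrt{2\pi}}\Big(\sum_{t=0}^m \Psi_t e^{-\ii t\theta}\Big)(\varepsilon_0),
$$
and the bound $\kappa<\infty$ guarantees that $\sum_{t=0}^m \Psi_t e^{-\ii t\theta}\to\Psi(\theta)$ in operator norm, so that $Z^{(0)}(\theta)=\frac{1}{\sqrt{2\pi}}\Psi(\theta)(\varepsilon_0)$ in $L^2_H(\Omega)$. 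Writing out the covariance operator and using $\langle u,\Psi(\theta)(\varepsilon_0)\rangle=\langle \Psi(\theta)^*u,\varepsilon_0\rangle$ together with $V=E[\varepsilon_0\otimes\varepsilon_0]$ then gives, for every $u\in H$,
$$
\mathcal{F}_\theta(u)=\frac{1}{2\pi}\,\Psi(\theta)\,E\big[\varepsilon_0\langle \Psi(\theta)^*u,\varepsilon_0\rangle\big]=\frac{1}{2\pi}\,\Psi(\theta)V\Psi(\theta)^*(u),
$$
as asserted.

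The routine parts are the verification of \textbf{(A3)} and the algebra for the covariance operator. The main obstacle, such as it is, lies in justifying the passage to the limit in the last step: one must confirm that the operator-norm convergence $\sum_{t=0}^m \Psi_t e^{-\ii t\theta}\to\Psi(\theta)$ transfers to $L^2_H$-convergence of $Z_m^{(0)}(\theta)$ to $\frac{1}{\sqrt{2\pi}}\Psi(\theta)(\varepsilon_0)$ and hence to convergence of the associated covariance operators, and to keep the complex inner-product conventions and the adjoint $\Psi(\theta)^*$ consistent throughout.
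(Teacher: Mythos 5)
Your argument is correct, and the first two assertions are handled exactly as in the paper: Assumption~\ref{ass} via the Bernoulli-shift representation and Kolmogorov's zero-one law, and Assumption~\ref{ass2} via the identity $\mathcal{P}_0(X_t)=\Psi_t(\varepsilon_0)$, the bound $\nu_2(\mathcal{P}_0(X_t))\leq\|\Psi_t\|_{\mathcal{L}}\nu_2(\varepsilon_0)$ and Lemma~\ref{le:suff}. Where you genuinely diverge is in identifying $\mathcal{F}_\theta$. The paper goes through the autocovariances: it computes $C_h=\sum_{k\geq 0}\Psi_{k+h}V\Psi_k^*$, observes that $\kappa<\infty$ forces $\sum_{h\in\mathbb{Z}}\|C_h\|_{\mathcal{L}}<\infty$, and then appeals to the representation \eqref{Ftheta}, i.e.\ $2\pi\mathcal{F}_\theta=\sum_h C_h e^{-\ii h\theta}$, which collapses to $\Psi(\theta)V\Psi(\theta)^*$; implicitly this uses part \textbf{(I)} of Theorem~\ref{main} to match the limit of the Ces\`aro means with the (now convergent) full Fourier series. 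You instead work directly with the martingale-difference limit: $Z^{(0)}(\theta)=\frac{1}{\sqrt{2\pi}}\Psi(\theta)(\varepsilon_0)$ in $L^2_H(\Omega)$ by operator-norm convergence of $\sum_{t\le m}\Psi_t e^{-\ii t\theta}$, and then $\mathcal{F}_\theta=\mathrm{Var}(Z^{(0)}(\theta))$ from \eqref{eq:sdo}. Your route is more self-contained at this step --- it needs neither the closed form of $C_h$ nor the summability of the autocovariances nor the Ces\`aro/Fourier-series identification --- at the cost of leaning on \eqref{eq:sdo}, which the paper states in its motivating discussion and substantiates inside the proof of Theorem~\ref{main2} (where $\mathrm{tr}(\mathcal{F}_\theta)=E\|Z^{(0)}(\theta)\|^2$ and the polarized version of the variance limit are established); the paper's route has the side benefit of exhibiting the explicit $C_h$ and confirming that $\mathcal{F}_\theta$ really is the classical spectral density \eqref{Ftheta} in this model. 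Both derivations are sound.
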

It is easy to see that $\varepsilon_0\in L_{H^\prime}^2(\Omega)$ implies that $X_t\in L^2_H(\Omega)$. Consequently, our Theorem~\ref{main4} improves the corresponding result in \cite{panaretos:tavakoli:2013a}, where it is required that $\varepsilon_0\in L_{H^\prime}^k(\Omega)$ for all $k\geq 1$.

When $\theta=0$ we recover the ordinary CLT for the partial sums of $(X_t)$ as e.g.\ proven in \cite{merlevede:peligrad:utev:1997}. While for real-valued linear processes the CLT only requires square summability of coefficients, the latter authors prove that in infinite dimensional Hilbert spaces assuming absolute summability is essentially sharp.
\begin{proof}

Noting that $\mathcal{P}_0(X_t)=\Psi_t(\varepsilon_0)$ condition {\bf (A3)} follows immediately from Lemma~\ref{le:suff}. Also, $\kappa<\infty$ implies $C_h=\sum_{k\geq 0}\Psi_{k+h}V\Psi_k^*$ and this in turn yields that $\sum_{h\in\mathbb{Z}}\|C_h\|_\mathcal{L}<\infty$. Therefore $\mathcal{F}_\theta$ has representation~\eqref{Ftheta}.
\end{proof}\bigskip

\noindent
\emph{Example~2: $L^2-m$--approximable processes.}\bigskip

\noindent 
\cite{hormann:kokoszka:2010} have used the concept of
 $L^p-m$--\emph{approximabil\-ity} for analyzing dependent functional data. Then a process $(X_t)$ is said to be $L^p-m$--approximable if $X_t$ has representation \eqref{bernoulli} with i.i.d.\ innovations and
$$
\sum_{m=1}^{\infty} \nu_p(X_0-X_0^{(m)})<\infty,
$$
where  $X_0^{(m)}=f(\varepsilon_{0},\dots,\varepsilon_{0-m+1},\tilde{\varepsilon}_{-m},\tilde{\varepsilon}_{-m-1}\dots)$
for some independent copy $(\tilde{\varepsilon}_t)$ of $(\varepsilon_t)$.  In \cite{hormann:kokoszka:2010} it is shown that this concept applies to many stationary and non-stationary functional time series models, including, for example, functional ARCH. The concept is somewhat related to {\em near epoch dependence (NED)} often employed in the econometrics literature. See, e.g., \cite{poetscher:prucha:1997}. If this condition holds with $p>2$, then by a recent result of \cite{berkes:horvath:rice:2013} a weak invariance principle for the partial sums process holds. This result has been sharpened by \cite{jirak:2013} who proved the same invariance principle under $p=2$ and also under a milder coupling condition.
\begin{Theorem}\label{th:5} Suppose that $(X_t)$ is $L^2-m$--approximable. Then Theorem~\ref{main2} applies for all $\theta\in[-\pi,\pi]$. Moreover, $\sum_{h\in\mathbb{Z}}\|C_h\|_\mathcal{S}<\infty$ and therefore
$$
\mathcal{F}_\theta=\frac{1}{2\pi}\sum_{h\in\mathbb{Z}}C_he^{-\ii\theta h}.
$$
\end{Theorem}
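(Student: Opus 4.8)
The plan is to obtain both assertions from the machinery already in place rather than from scratch. Since an $L^2$-$m$--approximable process has i.i.d.\ innovations, Kolmogorov's 0-1 law makes the tail field $\mathcal{G}_{-\infty}$ trivial, so $E[X_0\mid\mathcal{G}_{-\infty}]=0$ and Assumption~\ref{ass} holds. By Lemma~\ref{le:suff} it then suffices to verify condition \textbf{(A3)}, i.e.\ $\sum_{t\ge 0}\nu_2(\mathcal{P}_0(X_t))<\infty$; this delivers Assumption~\ref{ass2} for every $\theta$ and hence Theorem~\ref{main2} at all frequencies. The explicit formula for $\mathcal{F}_\theta$ will follow once I establish $\sum_{h}\|C_h\|_\mathcal{S}<\infty$. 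Throughout I write $\delta_m:=\nu_2(X_0-X_0^{(m)})$, so that $\sum_{m\ge1}\delta_m<\infty$ by hypothesis and $\delta_0\le 2\nu_2(X_0)$.

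The core step is a coupling bound relating $\mathcal{P}_0(X_t)=E[X_t\mid\mathcal{G}_0]-E[X_t\mid\mathcal{G}_{-1}]$ to $\delta_t$. Let $\bar X_t=f(\varepsilon_t,\dots,\varepsilon_1,\varepsilon_0',\varepsilon_{-1},\dots)$ be the curve obtained from $X_t$ by substituting a fresh independent copy $\varepsilon_0'$ of $\varepsilon_0$, with $\varepsilon_0'$ also independent of the block $(\tilde\varepsilon_j)$ used in the $m$-approximability coupling. Since $\varepsilon_0'$ is independent of $\mathcal{G}_0$ and shares the law of $\varepsilon_0$, integrating it out turns the conditioning on $\mathcal{G}_0$ into conditioning on $\mathcal{G}_{-1}$, i.e.\ $E[\bar X_t\mid\mathcal{G}_0]=E[X_t\mid\mathcal{G}_{-1}]$; hence $\mathcal{P}_0(X_t)=E[X_t-\bar X_t\mid\mathcal{G}_0]$ and, by the $L^2$-contraction property of conditional expectation, $\nu_2(\mathcal{P}_0(X_t))\le\nu_2(X_t-\bar X_t)$. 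Now I replace the entire past from time $0$ on by the independent block: because the block coupling overwrites position $0$ regardless of whether it holds $\varepsilon_0$ or $\varepsilon_0'$, one has $\bar X_t^{(t)}=X_t^{(t)}$, and the pairs $(\bar X_t,X_t^{(t)})$ and $(X_t,X_t^{(t)})$ have the same joint law (their two ``past'' sequences are independent and correctly distributed in either case). A triangle inequality then gives $\nu_2(X_t-\bar X_t)\le\nu_2(X_t-X_t^{(t)})+\nu_2(\bar X_t-X_t^{(t)})=2\delta_t$. Summing yields $\sum_{t\ge0}\nu_2(\mathcal{P}_0(X_t))\le 2\sum_{t\ge0}\delta_t<\infty$, the $t=0$ term being at most $4\nu_2(X_0)$ and the tail summable; this is \textbf{(A3)}.

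For the covariances, fix $h\ge 0$ and note that $X_h^{(h)}$ involves only $\varepsilon_h,\dots,\varepsilon_1$ and the independent block $(\tilde\varepsilon_0,\tilde\varepsilon_{-1},\dots)$, so it is independent of $X_0$; as both are centered, $E[X_h^{(h)}\otimes X_0]=0$. Using $\|E[Y\otimes Z]\|_\mathcal{S}\le E[\|Y\|\,\|Z\|]\le\nu_2(Y)\nu_2(Z)$, this gives $\|C_h\|_\mathcal{S}=\|E[(X_h-X_h^{(h)})\otimes X_0]\|_\mathcal{S}\le\delta_h\,\nu_2(X_0)$. Since $C_{-h}=C_h^*$ and $\|C_h^*\|_\mathcal{S}=\|C_h\|_\mathcal{S}$, summing over $h\in\mathbb{Z}$ produces $\sum_h\|C_h\|_\mathcal{S}\le\nu_2(X_0)\big(\delta_0+2\sum_{h\ge1}\delta_h\big)<\infty$. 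Absolute summability makes the Cesàro averages $\sum_{|h|<n}(1-|h|/n)\,C_h e^{-\ii h\theta}$ converge, in Hilbert--Schmidt norm, to $\sum_h C_h e^{-\ii h\theta}$; matching this against relation \textbf{(I)} of Theorem~\ref{main}, which identifies the weak-operator limit of the same averages with $2\pi\mathcal{F}_\theta$, yields $\mathcal{F}_\theta=\frac{1}{2\pi}\sum_h C_h e^{-\ii\theta h}$ by uniqueness of the limit.

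I expect the main obstacle to be the coupling inequality $\nu_2(\mathcal{P}_0(X_t))\le 2\delta_t$: one must argue carefully that substituting the single innovation $\varepsilon_0$ converts the conditioning on $\mathcal{G}_0$ into conditioning on $\mathcal{G}_{-1}$, and that the two coupled pairs are genuinely equal in distribution, so that the stray term is exactly $\delta_t$ rather than merely $O(\nu_2(X_0))$. Everything else reduces to Lemma~\ref{le:suff}, the elementary Hilbert--Schmidt estimate, and Theorem~\ref{main}.
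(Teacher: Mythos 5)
Your proof is correct and follows the same overall strategy as the paper: reduce everything to Lemma~\ref{le:suff} by establishing \textbf{(A3)} through a bound of the form $\nu_2(\mathcal{P}_0(X_t))\le 2\,\nu_2\big(X_0-X_0^{(t)}\big)$. Where you differ is in how that bound is obtained. The paper's argument is shorter: since $X_t^{(t)}$ is independent of $\mathcal{G}_0$ (hence of $\mathcal{G}_{-1}$) and centered, $\mathcal{P}_0\big(X_t^{(t)}\big)=0$, so $\mathcal{P}_0(X_t)=\mathcal{P}_0\big(X_t-X_t^{(t)}\big)$, and then $E\|\mathcal{P}_0(Y)\|^2\le 2\big(E\|E[Y|\mathcal{G}_0]\|^2+E\|E[Y|\mathcal{G}_{-1}]\|^2\big)\le 4E\|Y\|^2$ gives the constant $2$. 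Your route through the single-innovation coupling $\bar X_t$ --- writing $\mathcal{P}_0(X_t)=E[X_t-\bar X_t\,|\,\mathcal{G}_0]$ and then a triangle inequality via $X_t^{(t)}$ using the equality in law of $(\bar X_t,X_t^{(t)})$ and $(X_t,X_t^{(t)})$ --- is valid and yields the same constant, but it carries exactly the extra burden you flag yourself (checking $E[\bar X_t|\mathcal{G}_0]=E[X_t|\mathcal{G}_{-1}]$ and the distributional identity of the coupled pairs), none of which is needed in the paper's version. On the remaining points you are more self-contained than the paper: the estimate $\|C_h\|_\mathcal{S}=\big\|E\big[(X_h-X_h^{(h)})\otimes X_0\big]\big\|_\mathcal{S}\le \nu_2(X_0)\,\nu_2\big(X_0-X_0^{(h)}\big)$ and the identification of $\mathcal{F}_\theta$ via Ces\`aro convergence in $\|\cdot\|_\mathcal{S}$ matched against relation \textbf{(I)} are precisely what the paper delegates to the citation of H\"ormann, Kidzi\'nski and Hallin (2015a); your derivation of both is correct.
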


\begin{proof}
We first note that under $L^2-m$-approximability $X_s^{(s)}$ is independent of $\mathcal{G}_0$. Hence we get
\begin{align*}{E}\Vert \mathcal{P}_0(X_s) \Vert^2 &= {E}\Vert \mathcal{P}_0 (X_s-X_s^{(s)}) \Vert^2\\
 &\leq 2  {E} \left( \Vert {E}[ X_s-X_s^{(s)}\vert \mathcal{G}_0]\Vert^2 + \Vert {E}[ X_s-X_s^{(s)}\vert \mathcal{G}_{-1}]\Vert^2 \right)\\
 &\leq 4 {E}\Vert X_0-X_0^{(s)} \Vert^2=4\nu_2^2\big(X_0-X_0^{(s)}\big).
\end{align*}
Now apply Lemma~\ref{le:suff}.

The absolute summability condition on the $C_h$ has been derived in \citet{hoermann:kidzinski:hallin:2014}.
\end{proof}\bigskip

\noindent
\emph{Example 3: Linear processes with dependent errors}\bigskip

\noindent
Consider once again a linear process $X_t=\sum_{k\geq 0} \Psi_k(\varepsilon_{t-k})$, where now $(\varepsilon_t)_{t\in \mathbb{Z}}$ is a stationary and ergodic zero mean sequence.

\begin{Theorem}\label{th:6} Suppose that $(X_t)$ is a linear process satisfying the summability condition $\sum_{k\geq 0}\|\Psi_k\|_\mathcal{L}<\infty$. Assume moreover that $(\varepsilon_t)_{t\in \mathbb{Z}}$ satisfies {\bf (A3)}. Then the conclusion of Theorem~\ref{th:5} holds.
\end{Theorem}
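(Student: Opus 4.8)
The plan is to verify the hypotheses of Lemma~\ref{le:suff} for the process $(X_t)$, which immediately yields Assumption~\ref{ass2} for every $\theta$ and hence Theorem~\ref{main2}, and then separately to establish the absolute summability $\sum_h\|C_h\|_{\mathcal{S}}<\infty$ that upgrades the conclusion to the explicit form of Theorem~\ref{th:5}. First I would record the preliminaries: since $\sum_k\|\Psi_k\|_{\mathcal{L}}=\kappa<\infty$ and $\nu_2(\varepsilon_{t-k})=\nu_2(\varepsilon_0)$ is constant, the series defining $X_t$ converges absolutely in $L^2_H(\Omega)$, so $X_t\in L^2_H(\Omega)$; moreover $X_t=f(\varepsilon_t,\varepsilon_{t-1},\dots)$ is a Bernoulli shift of the stationary ergodic sequence $(\varepsilon_t)$, so $(X_t)$ is itself stationary and ergodic, and we may use the noise filtration $\mathcal{G}_k=\sigma(\varepsilon_k,\varepsilon_{k-1},\dots)$ as in the rest of this section.

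The key computation is condition {\bf (A3)} for $(X_t)$. Because $\mathcal{P}_0$ is a bounded operator on $L^2_H(\Omega)$ and each $\Psi_k$ is a bounded linear operator commuting with conditional expectation, I would pass $\mathcal{P}_0$ through the convergent series to obtain $\mathcal{P}_0(X_t)=\sum_{k\geq 0}\Psi_k\mathcal{P}_0(\varepsilon_{t-k})$. Since $\varepsilon_s$ is $\mathcal{G}_{-1}$-measurable for $s<0$, we have $\mathcal{P}_0(\varepsilon_s)=0$ there, so only indices $t\geq k$ contribute. The triangle inequality gives $\nu_2(\mathcal{P}_0(X_t))\leq\sum_{k\geq 0}\|\Psi_k\|_{\mathcal{L}}\,\nu_2(\mathcal{P}_0(\varepsilon_{t-k}))$, and summing over $t$ with Tonelli yields
\[
\sum_{t=0}^\infty \nu_2(\mathcal{P}_0(X_t)) \leq \Big(\sum_{k\geq 0}\|\Psi_k\|_{\mathcal{L}}\Big)\Big(\sum_{s\geq 0}\nu_2(\mathcal{P}_0(\varepsilon_s))\Big)=\kappa\sum_{s\geq 0}\nu_2(\mathcal{P}_0(\varepsilon_s))<\infty,
\]
the last bound being {\bf (A3)} for the noise. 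This establishes {\bf (A3)} for $(X_t)$.

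For the absolute summability I would first expand, using stationarity of $(\varepsilon_t)$ and $(\Psi_j a)\otimes(\Psi_k b)=\Psi_j(a\otimes b)\Psi_k^*$, the representation $C_h=\sum_{j,k\geq 0}\Psi_j\,C^\varepsilon_{h-j+k}\,\Psi_k^*$, where $C^\varepsilon_m=E[\varepsilon_m\otimes\varepsilon_0]$. The ideal property $\|ABC\|_{\mathcal{S}}\leq\|A\|_{\mathcal{L}}\|B\|_{\mathcal{S}}\|C\|_{\mathcal{L}}$ and reindexing then give $\sum_{h}\|C_h\|_{\mathcal{S}}\leq\kappa^2\sum_{m}\|C^\varepsilon_m\|_{\mathcal{S}}$, reducing matters to the noise covariances. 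Expanding $\varepsilon_0$ and $\varepsilon_m$ in the orthogonal increments $\mathcal{P}_{-l}$ and exploiting orthogonality across levels, I would write $C^\varepsilon_m=\sum_{l\geq 0}E[\mathcal{P}_{-l}(\varepsilon_m)\otimes\mathcal{P}_{-l}(\varepsilon_0)]$, whence $\|C^\varepsilon_m\|_{\mathcal{S}}\leq\sum_{l\geq 0}\nu_2(\mathcal{P}_0(\varepsilon_{m+l}))\,\nu_2(\mathcal{P}_0(\varepsilon_l))$ by stationarity and Cauchy--Schwarz; summing over $m$ bounds $\sum_m\|C^\varepsilon_m\|_{\mathcal{S}}$ by $\big(\sum_s\nu_2(\mathcal{P}_0(\varepsilon_s))\big)^2<\infty$. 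Combined with the weak convergence in relation {\bf (I)}, this identifies $\mathcal{F}_\theta=\frac{1}{2\pi}\sum_{h\in\mathbb{Z}}C_h e^{-\ii\theta h}$.

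The main obstacle is the pure non-determinism requirement $E[X_0|\mathcal{G}_{-\infty}]=0$ hidden in Assumption~\ref{ass}, which Lemma~\ref{le:suff} needs but which the computations above do not supply: commuting the conditional expectation through the filter reduces it to $E[\varepsilon_s|\mathcal{G}_{-\infty}]=0$, and this is precisely where the dependent-noise setting departs from the i.i.d.\ case, since ergodicity of $(\varepsilon_t)$ alone does not render the tail $\mathcal{G}_{-\infty}$ trivial. I would therefore invoke that the noise is purely non-deterministic (indeed {\bf (A2)} would fail at $\theta=0$ otherwise, as the $L^2$ norm of $\sum_{t=0}^n E[X_t|\mathcal{G}_{-\infty}]$ need not be $o(\sqrt n)$), and I would take care over the two interchange steps — passing $\mathcal{P}_0$ and the conditional expectations through the infinite series — each justified by $L^2_H$ convergence and boundedness of the operators involved.
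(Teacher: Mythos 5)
Your core step---verifying \textbf{(A3)} for $(X_t)$ by passing $\mathcal{P}_0$ through the series, applying the triangle inequality $\nu_2(\mathcal{P}_0(X_t))\leq\sum_k\|\Psi_k\|_\mathcal{L}\,\nu_2(\mathcal{P}_0(\varepsilon_{t-k}))$ and summing with Tonelli---is exactly the paper's proof, which consists of nothing more than this computation followed by an appeal to Lemma~\ref{le:suff}. The two points you add are sound and fill in what the paper leaves implicit: your direct derivation of $\sum_h\|C_h\|_\mathcal{S}<\infty$ via $C_h=\sum_{j,k}\Psi_j C^\varepsilon_{h-j+k}\Psi_k^*$ and the orthogonal-increment expansion of $C^\varepsilon_m$ supplies a part of the conclusion the paper does not argue at all in this proof, and your worry about $E[\varepsilon_0|\mathcal{G}_{-\infty}]=0$ is legitimate but is resolved by reading the hypotheses as the authors intend, namely that \textbf{(A3)} is introduced in Lemma~\ref{le:suff} only as a companion to Assumption~\ref{ass}, so ``$(\varepsilon_t)$ satisfies \textbf{(A3)}'' tacitly includes pure non-determinism of the noise, which then transfers to $X_t$ through the filter as you describe.
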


For the regular partial sums process, this result compares to \cite{rackauskas:suquet:2010} who have studied partial sums of linear processes in Banach spaces. They show that the CLT for the innovations transfers to the linear process under summability of $(\|\Psi_k\|_\mathcal{L}\colon k\geq 0)$.
\begin{proof}
We apply Lemma~\ref{le:suff}. To this end note that
\begin{align*}
\sum_{t\geq 0}\nu_2\big(\mathcal{P}_0(X_t)\big)&\leq \sum_{t\geq 0}\sum_{k\geq 0}\nu_2\big(\mathcal{P}_0(\Psi_k(\varepsilon_{t-k})\big)\\
&\leq \sum_{k\geq 0}\|\Psi_k\|_\mathcal{L} \ \sum_{t\geq 0} \nu_2\big(\mathcal{P}_0(\varepsilon_{t-k})\big)<\infty.
\end{align*}
\end{proof}

\section{Proofs}\label{se:proofs}

\subsection{Preliminary lemmas}\label{1}
Let $G=L^2_H(\Omega)$ and consider the Hilbert space $L^2_G([-\pi,\pi],\mathcal{B},\lambda)$, with $\mathcal{B}$ and $\lambda$ being the Lebesgue measure and the Borel $\sigma$-field on $[-\pi,\pi]$, respectively. For simplicity we write $L^2_G([-\pi,\pi])$. This space is equipped with inner product
$(V,W)=\int_{-\pi}^\pi E\langle V(\theta),W(\theta)\rangle d\theta$ and norm $\vertiii{V}=\sqrt{(V,V)}$.

\begin{Lemma}\label{le1}
Define $Z_n=Z_n(\theta):=\frac{1}{\sqrt{2\pi}}\sum_{t=0}^n \mathcal{P}_0(X_t)e^{-\ii t\theta}$. Then $(Z_n)$ is a Cauchy sequence in $L^2_G([-\pi,\pi])$, if and only if $\sum_{t\geq 0} E\|\mathcal{P}_{-t}(X_0)\|^2<\infty$. Moreover, under Assumption~\ref{ass} the latter summability condition holds.
\end{Lemma}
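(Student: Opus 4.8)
The plan is to compute the $L^2_G([-\pi,\pi])$ distance between two members of the sequence directly, the crucial point being that integration in $\theta$ orthogonalises the summation index $t$. Fix $m<n$ and write $Z_n-Z_m=\frac{1}{\sqrt{2\pi}}\sum_{t=m+1}^{n}\mathcal{P}_0(X_t)e^{-\ii t\theta}$. Expanding the $H$-norm and taking expectations gives
\[
E\|Z_n(\theta)-Z_m(\theta)\|^2=\frac{1}{2\pi}\sum_{s,t=m+1}^{n} E\langle \mathcal{P}_0(X_t),\mathcal{P}_0(X_s)\rangle\, e^{-\ii(t-s)\theta}.
\]
Integrating over $[-\pi,\pi]$ and using $\int_{-\pi}^{\pi}e^{-\ii(t-s)\theta}\,d\theta=2\pi\,\delta_{ts}$ annihilates every off-diagonal term, leaving
\[
\vertiii{Z_n-Z_m}^2=\sum_{t=m+1}^{n} E\|\mathcal{P}_0(X_t)\|^2.
\]
Hence $(Z_n)$ is Cauchy in $L^2_G([-\pi,\pi])$ if and only if the non-negative real series $\sum_{t\geq 0}E\|\mathcal{P}_0(X_t)\|^2$ converges. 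I regard this orthogonalisation as the conceptual core of the lemma: it is exactly what replaces an absolute-summability requirement by a square-summability one.

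To match the form in the statement I would invoke stationarity. Since the law of $(X_t,\mathcal{G}_0,\mathcal{G}_{-1})$ equals that of $(X_0,\mathcal{G}_{-t},\mathcal{G}_{-t-1})$, one has $E\|\mathcal{P}_0(X_t)\|^2=E\|\mathcal{P}_{-t}(X_0)\|^2$ for every $t\geq 0$, so the convergence criterion is precisely $\sum_{t\geq 0}E\|\mathcal{P}_{-t}(X_0)\|^2<\infty$. This settles the equivalence.

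For the final claim I would use the reverse-martingale structure of the conditional expectations along the decreasing filtration $(\mathcal{G}_{-t})_{t\geq 0}$. The increments $\mathcal{P}_{-t}(X_0)=E[X_0|\mathcal{G}_{-t}]-E[X_0|\mathcal{G}_{-t-1}]$ are pairwise orthogonal in $L^2_H(\Omega)$ and telescope to
\[
\sum_{t=0}^{N}\mathcal{P}_{-t}(X_0)=X_0-E[X_0|\mathcal{G}_{-N-1}].
\]
Because $X_0\in L^2_H(\Omega)$, the reverse martingale $(E[X_0|\mathcal{G}_{-t}])_{t\geq 0}$ converges in $L^2_H(\Omega)$ to $E[X_0|\mathcal{G}_{-\infty}]$, which vanishes by Assumption~\ref{ass}. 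Letting $N\to\infty$ therefore yields $\sum_{t\geq 0}\mathcal{P}_{-t}(X_0)=X_0$ in $L^2_H(\Omega)$, and orthogonality (Pythagoras) gives $\sum_{t\geq 0}E\|\mathcal{P}_{-t}(X_0)\|^2=E\|X_0\|^2<\infty$.

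The only step requiring genuine care is the last one: one must justify that reverse-martingale convergence is valid for Hilbert-space-valued conditional expectations, and that Assumption~\ref{ass}, through $E[X_0|\mathcal{G}_{-\infty}]=0$, is exactly what forces the $L^2_H$ limit to be zero. Everything preceding it is the elementary bookkeeping of the orthogonality computation.
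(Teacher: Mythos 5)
Your proof is correct and follows essentially the same route as the paper: the orthogonality of $\theta\mapsto e^{-\ii t\theta}$ under integration over $[-\pi,\pi]$ reduces $\vertiii{Z_n-Z_m}^2$ to $\sum_{t=m+1}^{n}E\|\mathcal{P}_{-t}(X_0)\|^2$ via stationarity, and the summability under Assumption~\ref{ass} is obtained exactly as in the paper by the reverse-martingale convergence $E[X_0|\mathcal{G}_{-n}]\to E[X_0|\mathcal{G}_{-\infty}]=0$ in $L^2_H(\Omega)$ together with Pythagoras for the orthogonal increments. No gaps.
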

We remark that this lemma provides a slightly weaker version of Assumption~\ref{ass2}, part 1. 
\begin{proof}
Using stationarity and the orthogonality of the functions $\theta\mapsto e^{-\ii t\theta}$, $\theta\in [-\pi,\pi]$, $(t\in\mathbb{Z})$ we obtain for $m<n$
\begin{align*}
\vertiii{Z_n-Z_m}^2&=\frac{1}{2\pi}\int_{-\pi}^\pi E\left\|\sum_{t=m+1}^n \mathcal{P}_0 (X_t) e^{-\ii t\theta}\right\|^2_Hd\theta\\
&=\sum_{t=m+1}^nE\|\mathcal{P}_0(X_t)\|^2=\sum_{t=m+1}^nE\|\mathcal{P}_{-t}(X_0)\|^2.
\end{align*}
By the arguments in the proof of Lemma~\ref{le:suff} we have
$$
\sum_{t\geq 0} E\|\mathcal{P}_{-t}(X_0)\|^2 =\lim_{n\to\infty}E\|X_0-E[X_0|\mathcal{G}_{-n}]\|^2 =
E\|X_0\|^2 \; < \infty.
$$
\end{proof}

It follows under Assumption~\ref{ass} that there exists an element $Z\in L^2_G([-\pi,\pi])$ with $\vertiii{Z_n-Z}\to 0$.
This in turn has some important implications.\medskip

\noindent
(i)\ Since 
$$
\vertiii{Z}^2=\int_{-\pi}^\pi E\|Z(\theta)\|^2 d\theta<\infty,
$$
we conclude that $E\|Z(\theta)\|^2<\infty$ for all $\theta \in M_0=[-\pi,\pi]\backslash N_0$ where $\lambda(N_0)=0$. Hence, for all $\theta \in M_0$ the covariance operator $\mathcal{F}_{\theta}:= E \big[Z(\theta)\otimes Z(\theta)\big]$ is well defined, self-adjoint and non-negative definite. The denotation $\mathcal{F}_\theta$ is intentional. As we will see later {\em it is defining the spectral density operator} (compare to \eqref{eq:sdo}). Since $\mathrm{tr}\big(\mathcal{F}_{\theta}\big)=E\|Z(\theta)\|^2$, this operator is trace class. For $\theta\in N_0$ we set $\mathcal{F}_{\theta}=0$.\medskip

\noindent
(ii)\ There exists a sequence $(n_k)$ such that $E\|Z_{n_k}(\theta)-Z(\theta)\|^2\to 0$ for all $\theta \in M_1:= [-\pi,\pi]\backslash N_1$, where $\lambda(N_1)=0$.\medskip

\noindent
(iii) By construction the mapping $\theta\mapsto Z(\theta)\in G$ is measurable, and the mapping $Z(\theta):G\to \mathcal{S}$ (the set of Hilbert-Schmidt operators on $H$), $Z(\theta)\mapsto E\big[Z(\theta)\otimes Z(\theta)\big]$, is continuous. Hence, $\theta \to\mathcal{F}_{\theta}$ is measurable as a mapping from $[-\pi,\pi]$ to the space $\mathcal{S}$, which is known to be a separable Hilbert space. Consequently the integral in {\bf (III)} of Theorem~\ref{main} is well defined.\medskip

The next lemma will be used in the proof of tightness and implies part {\bf (II)} of Theorem~\ref{main}.

\begin{Lemma}\label{le1b} Under
Assumption~\ref{ass} we have for all $\theta \in M_2=[-\pi,\pi]\backslash N_2$ with $\lambda(N_2)=0$ that
$$\mathrm{tr}\big(\mathcal{F}_{n;\theta}\big)=\frac{1}{2\pi}\sum_{|h|<n}\left(1-\frac{|h|}{n}\right)E\langle X_h,X_0\rangle e^{-\ii h\theta}\to \mathrm{tr}\big(\mathcal{F}_{\theta}\big)<\infty.$$ 
\end{Lemma}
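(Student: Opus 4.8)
The plan is to recognise $\mathrm{tr}(\mathcal{F}_{n;\theta})$ as a Fejér (Cesàro) mean and to pin down its a.e.\ limit through the classical Fejér--Lebesgue theorem. First I would justify the first equality in the statement: taking traces in the identity $2\pi\mathcal{F}_{n;\theta}=\sum_{|h|<n}(1-|h|/n)C_h e^{-\ii h\theta}$ and using $\mathrm{tr}(C_h)=\mathrm{tr}(E[X_h\otimes X_0])=E\langle X_h,X_0\rangle$ yields $\mathrm{tr}(\mathcal{F}_{n;\theta})=\frac{1}{2\pi}\sum_{|h|<n}(1-|h|/n)E\langle X_h,X_0\rangle e^{-\ii h\theta}$. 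Writing $f(\theta):=\mathrm{tr}(\mathcal{F}_\theta)=E\|Z(\theta)\|^2$, I note that $f\in L^1([-\pi,\pi])$ because $\int_{-\pi}^\pi f(\theta)\,d\theta=\vertiii{Z}^2<\infty$ by Lemma~\ref{le1}, so in particular $f(\theta)<\infty$ a.e. The whole proof then reduces to showing that the above trigonometric average is exactly the Fejér mean of $f$, i.e.\ that the Fourier coefficients of $f$ are $\frac{1}{2\pi}E\langle X_h,X_0\rangle$.

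The crux is therefore the identity $\int_{-\pi}^\pi f(\theta)e^{\ii h\theta}\,d\theta=E\langle X_h,X_0\rangle$ for every $h\in\mathbb{Z}$, which I would obtain by passing to the limit in the corresponding statement for $Z_n$. Since $Z_n\to Z$ in $L^2_G([-\pi,\pi])$, the reverse triangle inequality together with Cauchy--Schwarz gives $\int_{-\pi}^\pi|E\|Z_n(\theta)\|^2-f(\theta)|\,d\theta\le\vertiii{Z_n-Z}(\vertiii{Z_n}+\vertiii{Z})\to0$, so $\int_{-\pi}^\pi E\|Z_n(\theta)\|^2e^{\ii h\theta}\,d\theta\to\int_{-\pi}^\pi f(\theta)e^{\ii h\theta}\,d\theta$. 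On the other hand, expanding $E\|Z_n(\theta)\|^2=\frac{1}{2\pi}\sum_{s,t=0}^nE\langle\mathcal{P}_0(X_t),\mathcal{P}_0(X_s)\rangle e^{-\ii(t-s)\theta}$ and integrating against $e^{\ii h\theta}$ retains only the diagonal $t-s=h$, leaving $\sum_{s}E\langle\mathcal{P}_0(X_{s+h}),\mathcal{P}_0(X_s)\rangle$ over the admissible range of $s$; by stationarity this equals $\sum_sE\langle\mathcal{P}_{-s}(X_h),\mathcal{P}_{-s}(X_0)\rangle$. Letting $n\to\infty$ (absolute summability of this series following from Cauchy--Schwarz and $\sum_sE\|\mathcal{P}_{-s}(X_0)\|^2<\infty$, again from Lemma~\ref{le1}) and invoking the orthogonality of the $\mathcal{P}_k$ together with the convergent expansions $X_0=\sum_{s\ge0}\mathcal{P}_{-s}(X_0)$ and $X_h=\sum_{k\le h}\mathcal{P}_k(X_h)$ --- valid precisely because $E[X_0|\mathcal{G}_{-\infty}]=0$ under Assumption~\ref{ass} --- I would recognise the limit as $E\langle X_h,X_0\rangle$.

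With the Fourier coefficients identified, $\mathrm{tr}(\mathcal{F}_{n;\theta})$ is the $(n-1)$-st Fejér mean $\sigma_{n-1}(f)(\theta)$, and since $f\in L^1([-\pi,\pi])$ the Fejér--Lebesgue theorem yields $\sigma_{n-1}(f)(\theta)\to f(\theta)$ at every Lebesgue point of $f$, hence for all $\theta$ outside a Lebesgue-null set $N_2$, which is exactly the claim (with $\mathrm{tr}(\mathcal{F}_\theta)=f(\theta)<\infty$ there). I expect the main obstacle to be the crux identity of the second paragraph: one must simultaneously take the $L^2_G$-limit on one side and carry out the explicit double-sum computation on the other, and the careful bookkeeping of the summation ranges (which depend on the sign of $h$) together with the justification of the orthogonal expansions of $X_0$ and $X_h$ under Assumption~\ref{ass} is where the genuine work lies; the remaining Cesàro-summability step is then classical.
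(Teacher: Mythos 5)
Your proposal is correct and follows essentially the same route as the paper: identify $\frac{1}{2\pi}E\langle X_h,X_0\rangle$ as the Fourier coefficients of $f(\theta)=E\|Z(\theta)\|^2$ by passing to the limit in the explicit double-sum computation for $Z_n$ (using $\vertiii{Z_n-Z}\to0$, stationarity, and the orthogonality of the projections $\mathcal{P}_k$ under Assumption~\ref{ass}), and then conclude by the Fej\'er--Lebesgue theorem. The only cosmetic difference is that the paper telescopes the projection sums into reverse-martingale limits $E[X_0|\mathcal{G}_{-h}]$ and $X_{-h}$ rather than invoking the full orthogonal expansions of $X_0$ and $X_h$, but these are equivalent.
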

\begin{proof}
Set $c_h:=\int_{-\pi}^\pi E\|Z(\theta)\|^2 e^{\ii h\theta}d\theta$. Using (i) we infer from the Fej\'er-Lebesgue theorem that
\begin{equation*}
\sum_{|h|<n}\left(1-\frac{|h|}{n}\right)c_h e^{-\ii h\theta}\to E\|Z(\theta)\|^2=\mathrm{tr}\big(\mathcal{F}_\theta\big) <\infty\quad \text{for almost all $\theta$.}
\end{equation*}
We define $M_2$ as the set of convergence points.
We show now that $c_h=E\langle X_h,X_0\rangle$. Using Lemma~\ref{le1} and continuity of $\vertiii{\,\cdot\,}$, it can be readily shown that
$$
c_h=\lim_{n\to\infty}\int_{-\pi}^\pi E\|Z_n(\theta)\|^2 e^{\ii h\theta}d\theta.
$$
Without loss of generality assume $h\geq 0$. Using stationarity we deduce
\begin{align*}
&\int_{-\pi}^\pi E\|Z_n(\theta)\|^2 e^{\ii h\theta}d\theta=\sum_{t=0}^n\sum_{s=0}^nE\langle \mathcal{P}_0(X_t),\mathcal{P}_0(X_s)\rangle\frac{1}{2\pi}\int_{-\pi}^\pi e^{-\ii(t-s-h)\theta}d\theta\\
&\qquad=\sum_{t=h}^n E\langle \mathcal{P}_0(X_t),\mathcal{P}_0(X_{t-h})\rangle=\sum_{t=h}^n E\langle \mathcal{P}_{-t}(X_0),\mathcal{P}_{-t}(X_{-h})\rangle.
\end{align*}
Since the terms $\mathcal{P}_{-t}(X_0)$ and $\mathcal{P}_{-s}(X_{-h})$ are orthogonal in $L_H^2(\Omega)$ for $s\neq t$, it follows that 
\begin{align*}
&\int_{-\pi}^\pi E\|Z_n(\theta)\|^2 e^{\ii h\theta}d\theta=E\left\langle\sum_{t=h}^n \mathcal{P}_{-t}(X_0),\sum_{s=h}^n  \mathcal{P}_{-s}(X_{-h})\right\rangle.
\end{align*}
We recall that $(E[X_0|\mathcal{G}_{-t}]\colon t\geq 0)$ is a reverse martingale, it follows by Assumption~\ref{ass} that 
\begin{equation}\label{revmarconv}\sum_{t=h}^n \mathcal{P}_{-t}(X_0)=E[X_0|\mathcal{G}_{-h}]-E[X_0|\mathcal{G}_{-(n+1)}]\stackrel{L_H^2(\Omega)}{\longrightarrow}E[X_0|\mathcal{G}_{-h}].
\end{equation}
Similarly, $\sum_{s=h}^n  \mathcal{P}_{-s}(X_{-h}) \stackrel{L_H^2(\Omega)}{\longrightarrow} E[X_{-h}|\mathcal{G}_{-h}]=X_{-h}.
$
And hence, by continuity of the inner product, $c_h=E\big\langle E[X_0|\mathcal{G}_{-h}], X_{-h}\big\rangle=E\langle X_h,X_0\rangle$.
\end{proof} 


\begin{Lemma}\label{le5} The operators $\mathcal{F}_{\theta}$ define the \emph{spectral density operators} of $(X_t)$ at frequency $\theta$. This is
\[ C_h= \int_{-\pi}^{\pi} \mathcal{F}_{\theta} \,e^{\ii h \theta} d\theta , \qquad \forall h \in \mathbb{Z}.\]
\end{Lemma}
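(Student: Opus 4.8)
The goal is to show that the operator $\mathcal{F}_\theta$ constructed in Lemma~\ref{le1} via the limit $Z$ satisfies the Fourier inversion relation $C_h=\int_{-\pi}^\pi \mathcal{F}_\theta e^{\ii h\theta}d\theta$ for every $h\in\mathbb{Z}$. The natural strategy is to mimic the scalar computation already carried out in Lemma~\ref{le1b}, but now at the operator level rather than just for the trace. Recall from part (i) that $\mathcal{F}_\theta=E[Z(\theta)\otimes Z(\theta)]$ where $Z(\theta)=\lim_n Z_n(\theta)$ in the appropriate sense, and from (iii) that $\theta\mapsto\mathcal{F}_\theta$ is a measurable map into the separable Hilbert space $\mathcal{S}$ of Hilbert--Schmidt operators, so that the integral $\int_{-\pi}^\pi \mathcal{F}_\theta e^{\ii h\theta}d\theta$ is a well-defined Bochner integral in $\mathcal{S}$.

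\textbf{Key steps.}
First I would compute the ``operator Fourier coefficients'' $D_h:=\int_{-\pi}^\pi \mathcal{F}_\theta e^{\ii h\theta}d\theta$ directly. Since $\mathcal{F}_\theta=E[Z(\theta)\otimes Z(\theta)]$ and $Z=\lim Z_n$ in $L^2_G([-\pi,\pi])$, I would first establish that $D_h=\lim_{n\to\infty}\int_{-\pi}^\pi E[Z_n(\theta)\otimes Z_n(\theta)]\,e^{\ii h\theta}d\theta$, using the continuity of the map $V\mapsto E[V\otimes V]$ from $G$ into $\mathcal{S}$ together with the $L^2_G$-convergence $Z_n\to Z$ — this is exactly the operator-valued analogue of the limit identity for $c_h$ used in Lemma~\ref{le1b}. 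Second, with $Z_n(\theta)=\frac{1}{\sqrt{2\pi}}\sum_{t=0}^n\mathcal{P}_0(X_t)e^{-\ii t\theta}$, I would expand the finite double sum and exploit orthogonality of the characters $\theta\mapsto e^{-\ii t\theta}$, so that $\frac{1}{2\pi}\int_{-\pi}^\pi e^{-\ii(t-s-h)\theta}d\theta$ selects only the terms with $t-s=h$. Assuming $h\ge 0$ without loss of generality, this collapses the integral to
\begin{equation*}
\sum_{t=h}^n E\big[\mathcal{P}_0(X_t)\otimes\mathcal{P}_0(X_{t-h})\big]=\sum_{t=h}^n E\big[\mathcal{P}_{-t}(X_0)\otimes\mathcal{P}_{-t}(X_{-h})\big],
\end{equation*}
using stationarity exactly as in the trace computation. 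Third, I would invoke the reverse-martingale convergence \eqref{revmarconv} from the proof of Lemma~\ref{le1b}, namely $\sum_{t=h}^n\mathcal{P}_{-t}(X_0)\to E[X_0|\mathcal{G}_{-h}]$ and $\sum_{s=h}^n\mathcal{P}_{-s}(X_{-h})\to X_{-h}$ in $L^2_H(\Omega)$. By orthogonality of the increments $\mathcal{P}_{-t}$ for distinct indices, the operator sum telescopes into $E\big[E[X_0|\mathcal{G}_{-h}]\otimes X_{-h}\big]$, and a conditioning argument (the tensor $E[\,\cdot\otimes X_{-h}]$ is insensitive to replacing $X_0$ by its $\mathcal{G}_{-h}$-projection, since $X_{-h}$ is $\mathcal{G}_{-h}$-measurable) gives $E[X_0\otimes X_{-h}]=C_h$. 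Thus $D_h=C_h$.

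\textbf{Main obstacle.}
The routine character-orthogonality and stationarity manipulations are entirely parallel to Lemma~\ref{le1b}, so the genuinely delicate point is the \emph{passage from the $L^2_G([-\pi,\pi])$-convergence of $Z_n$ to convergence of the integrals $\int E[Z_n(\theta)\otimes Z_n(\theta)]e^{\ii h\theta}d\theta$ in the Hilbert--Schmidt norm}. Here I must justify interchanging the limit in $n$ with the Bochner integral over $\theta$; the clean way is to bound $\big\|E[Z_n\otimes Z_n]-E[Z\otimes Z]\big\|_{\mathcal{S}}$ pointwise in $\theta$ by a quantity controlled by $\nu_2(Z_n(\theta)-Z(\theta))\,(\nu_2(Z_n(\theta))+\nu_2(Z(\theta)))$ via the bilinearity of the tensor map, and then integrate, using the Cauchy--Schwarz inequality together with $\vertiii{Z_n-Z}\to 0$ and the uniform bound $\sup_n\vertiii{Z_n}<\infty$ guaranteed by Lemma~\ref{le1}. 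Establishing that this pointwise operator bound integrates to something vanishing — rather than merely holding for almost every fixed $\theta$ — is where the argument needs care, and it is the step that upgrades the almost-everywhere trace statement of Lemma~\ref{le1b} to the full operator identity required here.
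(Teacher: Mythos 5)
Your proposal is correct and follows essentially the same route as the paper's proof: well-definedness of the Bochner integral via measurability and $\int \mathrm{tr}(\mathcal{F}_\theta)\,d\theta<\infty$, replacement of $Z$ by $Z_n$ justified by $\vertiii{Z_n-Z}\to 0$, character orthogonality plus stationarity to collapse the double sum, and the reverse-martingale convergence \eqref{revmarconv} to identify the limit with $C_h$. The only (harmless) difference is that you carry out the limit interchange at the operator level in $\|\cdot\|_{\mathcal{S}}$, whereas the paper first projects onto arbitrary $u,v\in H$ and identifies $\langle I(v),u\rangle=\langle C_h(v),u\rangle$ scalar-wise, which makes the interchange step slightly lighter.
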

\begin{proof} We have seen in (iii) that the mapping $\theta \mapsto \mathcal{F}_{\theta}$ is measurable. The integrand is valued in the separable Hilbert space $\mathcal{S}$. 
Since $\int_{-\pi}^{\pi}\|\mathcal{F}_\theta\|_\mathcal{S}d\theta\leq \int_{-\pi}^{\pi}\mathrm{tr}\big( \mathcal{F}_{\theta}\big)d \theta < \infty$ we know that $\mathcal{F}_\theta$ is strongly integrable and hence we can define (in the sense of a Bochner integral) $I=\int_{-\pi}^{\pi} \mathcal{F}_{\theta}\, e^{\ii h \theta} d \theta$. Let $u,v \in H$. Since Bochner integrals are interchangeable with bounded linear operators we obtain
\begin{align*} \langle I (v) , u \rangle &= \int_{-\pi}^{\pi} \langle \mathcal{F}_{\theta}(v),u\rangle e^{\ii h \theta} d\theta  = \int_{-\pi}^{\pi} {E}\langle Z(\theta),u\rangle \overline{\langle Z(\theta),v\rangle}_H e^{\ii h \theta} d\theta \\
&= \lim_{n\rightarrow \infty}\int_{-\pi}^{\pi} {E}\langle Z_{n}(\theta),u\rangle \: \overline{\langle Z_{n}(\theta),v\rangle}_H e^{\ii h \theta} d\theta.
\end{align*}
The last equality can be deduced from $\vertiii{Z_n-Z}\to 0$. Assume now with loss of generality that $h\geq 0$. Similar arguments as in Lemma \ref{le1b} lead to
\begin{align*}
\langle I (v) , u \rangle 
&= \lim_{n\rightarrow \infty} {E} \bigg[\bigg\langle  \sum_{t=0}^{n-h} \mathcal{P}_{-t}(X_h),u\bigg\rangle \: \bigg\langle v,\sum_{s=0}^{n-h}\mathcal{P}_{-s}(X_0) \bigg\rangle\bigg].
\end{align*}
From \eqref{revmarconv} it follows that
\begin{align*}
\langle I (v) , u \rangle = {E}\big[ \langle  E[X_{h}|\mathcal{G}_0],u \rangle \: \langle v, X_0\rangle\big] &= \langle C_h(v)  ,u \rangle.
\end{align*}
Since $u$ and $v$ are arbitrary in $H$, we can infer that $I = C_h$. 
\end{proof}

We show next that the projections $\langle S_n(\theta),u\rangle/\sqrt{n}$, $u\in H$, converge weakly to $\langle S_0(\theta),u\rangle$, where $S_0(\theta)\sim \mathcal{CN}_{H}(0,\pi\mathcal{F}_\theta)$ is the limiting complex Gaussian element and where $\mathcal{F}_\theta$ is defined in (i). The first step towards this result is given by the following proposition.
\begin{Proposition}\label{prop1} 
Under Assumption~\ref{ass} there exists for all $u\in H$ a set $\widetilde N\subset [-\pi,\pi]$ of Lebesgue measure 0, such that on $\widetilde M=[-\pi,\pi]\backslash \widetilde N$ the following holds:
\begin{description}
\item[(a)] $\lim_{n\to\infty} \mathrm{Var}(\langle S_n(\theta),u\rangle)/n=\lim_{n\to\infty}2\pi \langle \mathcal{F}_{n;\theta}(u),u\rangle=2\pi \langle \mathcal{F}_\theta(u),u\rangle$;\\[-3ex]
\item[(b)] $\big(\mathrm{Re}(\langle S_n(\theta),u\rangle),\mathrm{Im}(\langle S_n(\theta),u\rangle)\big)/\sqrt{n}\stackrel{d}{\to} \mathcal{N}_2(0,\pi \langle \mathcal{F}_\theta(u),u\rangle\times I_2)$;\\[-3ex]
\end{description}
\end{Proposition}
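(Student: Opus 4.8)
The plan is to reduce both claims to one-dimensional statements by projecting onto $u$, treating (a) by a Fej\'er--Lebesgue argument parallel to Lemma~\ref{le1b} and (b) by the central limit theorem for Fourier transforms of regular stationary sequences in the spirit of \cite{wu:2010}.

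For (a) I would first record the exact identity, valid for every $n$ and $\theta$. Since $(a\otimes b)(u)=a\langle u,b\rangle$ and $\langle u,S_n(\theta)\rangle=\overline{\langle S_n(\theta),u\rangle}$,
\[
\mathrm{Var}\big(\langle S_n(\theta),u\rangle\big)=E\big|\langle S_n(\theta),u\rangle\big|^2=\big\langle E[S_n(\theta)\otimes S_n(\theta)](u),u\big\rangle=2\pi n\,\langle\mathcal{F}_{n;\theta}(u),u\rangle ,
\]
which is the first equality after dividing by $n$. For the second I set $g(\theta):=\langle\mathcal{F}_\theta(u),u\rangle=E|\langle Z(\theta),u\rangle|^2$. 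This $g$ lies in $L^1([-\pi,\pi])$ because $\int g\,d\theta\le\|u\|^2\vertiii{Z}^2<\infty$, and by Lemma~\ref{le5} its Fourier coefficients equal $\langle C_h(u),u\rangle$. As $\langle\mathcal{F}_{n;\theta}(u),u\rangle=\frac{1}{2\pi}\sum_{|h|<n}(1-|h|/n)\langle C_h(u),u\rangle e^{-\ii h\theta}$ is precisely the $n$-th Fej\'er mean of $g$, the Fej\'er--Lebesgue theorem yields convergence to $g(\theta)$ for almost every $\theta$; the exceptional points enter the $u$-dependent null set $\widetilde N$.

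For (b), write $u=u_1+\ii u_2$ with $u_1,u_2\in H_0$ and put $a_t=\langle X_t,u_1\rangle$, $b_t=\langle X_t,u_2\rangle$, so that $\langle X_t,u\rangle=a_t-\ii b_t$ and $\langle S_n(\theta),u\rangle=A_n-\ii B_n$ with $A_n=\sum_{t=1}^n a_te^{-\ii t\theta}$ and $B_n=\sum_{t=1}^n b_te^{-\ii t\theta}$. The $\mathbb{R}^2$-valued sequence $(a_t,b_t)$ is stationary, ergodic, square integrable and inherits $E[(a_0,b_0)\mid\mathcal{G}_{-\infty}]=0$ from Assumption~\ref{ass}, since bounded linear functionals commute with conditional expectation. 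I would then invoke the Fourier CLT for such regular sequences (the result extended here, in its finite-dimensional real form, valid for almost every $\theta$ under exactly these hypotheses) to obtain joint asymptotic normality of $(\mathrm{Re}A_n,\mathrm{Im}A_n,\mathrm{Re}B_n,\mathrm{Im}B_n)/\sqrt n$. Because $\mathrm{Re}\langle S_n(\theta),u\rangle=\mathrm{Re}(A_n)+\mathrm{Im}(B_n)$ and $\mathrm{Im}\langle S_n(\theta),u\rangle=\mathrm{Im}(A_n)-\mathrm{Re}(B_n)$ form a fixed linear image, the pair $(\mathrm{Re}\langle S_n,u\rangle,\mathrm{Im}\langle S_n,u\rangle)/\sqrt n$ is asymptotically Gaussian. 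The spectral structure of the limiting covariance (equal variances and uncorrelated real and imaginary parts for each scalar transform), fed through these two relations, forces a circularly symmetric limit $\mathcal{N}_2(0,\sigma^2 I_2)$; and $2\sigma^2=\lim_n\mathrm{Var}(\langle S_n,u\rangle)/n=2\pi g(\theta)$ by (a), so $\sigma^2=\pi\langle\mathcal{F}_\theta(u),u\rangle$.

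The hard part is the CLT in (b). Under Assumption~\ref{ass} one controls $(Z_n)$ only in the integrated space $L^2_G([-\pi,\pi])$ (Lemma~\ref{le1}), not at a fixed frequency, so the martingale approximation \eqref{repSn} cannot be run at an individual $\theta$ and asymptotic normality can be secured only for almost every $\theta$ --- this is precisely the delicate point of the Fourier CLT. Two further bookkeeping issues need care: producing a \emph{single} null set for the vector limit (take a countable dense family of directions, use the Cram\'er--Wold device together with convergence of the covariances to upgrade to all directions), and verifying that the pseudo-covariance $\frac1n E[\langle S_n(\theta),u\rangle^2]\to0$, which underlies the circular symmetry and fails only at $\theta\in\{0,\pm\pi\}$ owing to oscillation at the doubled frequency $2\theta$.
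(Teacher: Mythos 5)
Your argument is correct in substance and shares the paper's overall architecture (scalar projections, the Peligrad--Wu Fourier CLT for the distributional limit, a countable dense family of directions to produce one common null set), but your treatment of part \textbf{(a)} is genuinely different and arguably cleaner. The paper obtains \textbf{(a)} by adapting Lemmas~4.1 and 4.2 of \cite{wu:2010} to get $\mathrm{Var}(\langle S_n(\theta),u\rangle)/n\to 2\pi\,\mathrm{Var}(Z^u(\theta))$ and then identifying $Z^u(\theta)=\langle Z(\theta),u\rangle$ through the a.e.\ subsequence convergence of $Z_{n_k}$; you instead observe that $\langle \mathcal{F}_{n;\theta}(u),u\rangle$ is exactly the Fej\'er mean of the $L^1$ function $\theta\mapsto\langle\mathcal{F}_\theta(u),u\rangle$ (its Fourier coefficients being $\langle C_h(u),u\rangle$ by Lemma~\ref{le5}) and invoke Fej\'er--Lebesgue. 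This is precisely the paper's own Lemma~\ref{le1b} with the trace replaced by a single diagonal entry, so it stays entirely within the paper's toolkit and avoids re-deriving the scalar spectral density from \cite{wu:2010}; what it does not buy by itself is the existence of the scalar limit $Z^u(\theta)$, which the paper still needs elsewhere. For \textbf{(b)} your bivariate real formulation versus the paper's direct application of the (complex-valued) scalar CLT to $\langle X_t,u\rangle$ is a cosmetic difference.

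One point in your sketch deserves to be made explicit, since it is where the paper does real work: upgrading from the dense countable family $H^\prime$ to all $u\in H$ requires a bound on $\big|\langle\mathcal{F}_{n;\theta}(u),u\rangle-\langle\mathcal{F}_{n;\theta}(u^\prime),u^\prime\rangle\big|$ that is \emph{uniform in} $n$, and the paper gets this from $\sup_n\mathrm{tr}\big(\mathcal{F}_{n;\theta}\big)<\infty$, valid on the set $M_2$ of Lemma~\ref{le1b}. Your phrase ``convergence of the covariances'' gestures at this but you should state that $\frac1n E\big|\langle S_n(\theta),u-u^\prime\rangle\big|^2\le 2\pi\,\mathrm{tr}\big(\mathcal{F}_{n;\theta}\big)\|u-u^\prime\|^2$ and that the trace convergence must therefore be forced into the common null set; the same uniform second-moment bound is what lets you transfer \textbf{(b)} from $u^\prime$ to $u$ via characteristic functions. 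With that made precise, the proof is complete.
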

\begin{proof}
We first show that there exists for all $u\in H$ a set $N_u\subset [-\pi,\pi]$ with $\lambda(N_u)=0$, such that on $M_u=[-\pi,\pi]\backslash N_u$ {\bf (a)} and {\bf(b)} hold.

Let $u\in $ and let $\mathcal{G}_{k}(u)$ be the filtration of the process $(\langle X_t,u\rangle\colon t\in\mathbb{Z})$. From the results in Peligrad and Wu~\cite{wu:2010} we obtain that 
$\mathrm{Var}(\langle S_n(\theta),u\rangle)/n\to 2\pi f^u(\theta)$ for some function $f^u(\theta)$ which is finite on $M_u$. More precisely, slightly adapting the proofs of  Lemmas~4.1.\ and 4.2.\ in their article we obtain that the $L^2(\Omega)$ limit
$$
Z^u(\theta):=\lim_{n\to\infty}\frac{1}{\sqrt{2\pi}}\sum_{t=0}^n \mathcal{P}_0(\langle X_t,u\rangle )e^{-\ii t\theta}=\lim_{n\to\infty}\langle Z_n(\theta),u\rangle
$$
exists on $M_u$ and that
$
f^u(\theta)=\mathrm{Var}\big(Z^u(\theta)\big).
$
(Directly using their arguments would require to use the projection operator  $\mathcal{P}_0^u(\cdot)=E[\,\cdot\, |\mathcal{G}_0(u)]-E[\,\cdot\, |\mathcal{G}_{-1}(u)]$.)  We assume without loss of generality that $M=M_0\cap M_1$ is a subset of $M_u$, otherwise replace $M_u$ by $M_u\cap M$.
We now determine $f^u(\theta)$.
By result (ii) of Section~\ref{1} it follows that $E(\langle Z_{n_k}(\theta),u\rangle -\langle Z(\theta),u\rangle)^2\to 0$ for every $u\in H$ and all $\theta\in M_1$. Hence, by result (i) in the same section, we get
$$
\mathrm{Var}\big(\langle Z_{n_k}(\theta),u\rangle\big)\to \langle \mathcal{F}_{\theta}(u),u\rangle<\infty,
$$
for all $\theta\in M$ and all $u\in H$, which implies on $M_u$ the relation
$
f^u(\theta)= \langle \mathcal{F}_{\theta}(u),u\rangle<\infty.
$
This shows part {\bf (a)} on $M_u$.

We have
$E[\langle X_t,u\rangle|\mathcal{G}_{-\infty}]=\langle E[X_t|\mathcal{G}_{-\infty}],u\rangle$ and by  Assumption~\ref{ass} this is equal to zero. The tower property of conditional expectations implies $E[\langle X_t,u\rangle|\mathcal{G}_{-\infty}(u)]=0$ and hence on $M_u$ {\bf (b)} directly follows from \cite{wu:2010}. Let us note that their CLT result is stated for real valued time series, but this requirement is not needed. Hence we can apply it for the time series $(\langle X_t,u\rangle\colon t\geq 1)$ which takes values in $\mathbb{C}$ when $u\in H$. 

It remains to prove that for all $u$ in $H$ we can find a common exceptional set of Lebesgue measure 0. To this end let $H^\prime$ be a dense and countable subset of $H$. We set $\widetilde{M}=\cap_{u^\prime\in H^\prime} M_u$. Then $[-\pi,\pi]\backslash \widetilde{M}$ has Lebesgue measure 0. Furthermore,  for all $u^\prime\in H^\prime$ and $\theta\in \widetilde{M}$ {\bf (a)} and {\bf (b)} hold. The objective is now to extend this result to all $u\in H$. For {\bf (a)} we observe that
\begin{align*}
&\big|\langle \mathcal{F}_{n;\theta}(u),u\rangle-\langle \mathcal{F}_{\theta}(u),u\rangle\big|\\
&\quad \leq\big|\langle \mathcal{F}_{n;\theta}(u),u\rangle-\langle \mathcal{F}_{n;\theta}(u^\prime),u^\prime\rangle\big|+ \big|\langle \mathcal{F}_{\theta}(u^\prime),u^\prime\rangle-\langle \mathcal{F}_{\theta}(u),u\rangle\big|\\
&\qquad+ \big|\langle \mathcal{F}_{n;\theta}(u^\prime),u^\prime\rangle-\langle \mathcal{F}_{\theta}(u^\prime),u^\prime\rangle\big|\\
&\quad\leq \Big[\mathrm{tr}\big(\mathcal{F}_{n;\theta}\big)+ \mathrm{tr}\big(\mathcal{F}_{\theta}\big)\Big]\times\Big[\big(\|u\|+\|u^\prime\|\big)\times\|u-u^\prime\|\Big]\\
&\qquad+ \big|\langle \mathcal{F}_{n;\theta}(u^\prime),u^\prime\rangle-\langle \mathcal{F}_{\theta}(u^\prime),u^\prime\rangle\big|.
\end{align*}
Since we can assume without loss of generality that $M_2\subset \widetilde{M}$, it follows that for all $\theta\in\widetilde{M}$
$$
\limsup_{n\to\infty}\big|\langle \mathcal{F}_{n;\theta}(u),u\rangle-\langle \mathcal{F}_{\theta}(u),u\rangle\big|\leq 4\varepsilon(\|u\|+1)\, \mathrm{tr}\big(\mathcal{F}_\theta\big),
$$
if $\|u-u^\prime\|\leq \varepsilon\leq 1$. Since $\varepsilon$ can be chosen arbitrarily small result {\bf (a)} follows.

The proof of part {\bf (b)} follows along similar lines of arguments. Just compare the characteristic functions of the real and complex part of $\langle S_n(\theta),u\rangle$ to the corresponding normal ones.
\end{proof}

\subsection{Tightness}\label{se:tight}

\begin{Lemma}\label{le6}
Consider sequences $(p_j^{(n)}\colon j\geq 1)$, $n\geq 0$, with the following properties: (a) $p_j^{(n)}\geq 0$ for all $j,n$; (b) $\lim_n p_j^{(n)}=p_j^{(0)}$; (c) $\sum_{j\geq 1} p_j^{(0)}=p<\infty$; (d) $\lim_n\sum_{j\geq 1} p_j^{(n)}=p$; (e) $\sum_{j\geq 1} p_j^{(n)}<\infty$ for all $n\geq 1$. Then
$$
\lim_{m\to\infty}\sup_n\sum_{j> m} p_j^{(n)}=0.
$$
\end{Lemma}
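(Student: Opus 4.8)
The statement is a Scheffé-type uniform-summability result: conditions (b)--(d) say that the ``masses'' $p^{(n)}$ converge pointwise to $p^{(0)}$ and that their total masses converge to the (finite) total mass of $p^{(0)}$, and the conclusion is that the tails $\sum_{j>m}p_j^{(n)}$ are small uniformly in $n$. The plan is to first establish, for each \emph{fixed} $m$, the tail convergence
\[
\lim_{n\to\infty}\sum_{j>m}p_j^{(n)}=\sum_{j>m}p_j^{(0)}.
\]
This follows by writing $\sum_{j>m}p_j^{(n)}=\sum_{j\geq1}p_j^{(n)}-\sum_{j=1}^m p_j^{(n)}$: the full sum tends to $p$ by (d), while the finite head tends to $\sum_{j=1}^m p_j^{(0)}$ by (b), so the difference tends to $p-\sum_{j=1}^m p_j^{(0)}=\sum_{j>m}p_j^{(0)}$ by (c).

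With this in hand I would fix $\varepsilon>0$ and argue as follows. By (c) the tail $\sum_{j>m_0}p_j^{(0)}$ can be made smaller than $\varepsilon$ by choosing $m_0$ large. For this \emph{single} value $m_0$ the tail convergence above yields an index $N$ with $\sum_{j>m_0}p_j^{(n)}<\varepsilon$ for every $n\geq N$, which controls all but finitely many $n$. The remaining indices $n\in\{0,1,\dots,N-1\}$ are then handled one at a time: each of these sequences is summable (by (e) for $n\geq1$ and by (c) for $n=0$), so there is an $m_n$ with $\sum_{j>m_n}p_j^{(n)}<\varepsilon$. Setting $M=\max\{m_0,m_1,\dots,m_{N-1}\}$ and using that the tails $\sum_{j>m}p_j^{(n)}$ are nonincreasing in $m$ (the summands being nonnegative by (a)), I obtain $\sum_{j>M}p_j^{(n)}<\varepsilon$ for \emph{every} $n$, hence $\sup_n\sum_{j>m}p_j^{(n)}\leq\varepsilon$ for all $m\geq M$. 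Letting $\varepsilon\downarrow0$ gives the claim.

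The only genuine subtlety is that one cannot interchange the two limits $m\to\infty$ and $n\to\infty$ directly: the per-$m$ tail limit does not by itself deliver uniformity in $n$. The decisive device is the splitting of $n$ into the tail block $n\geq N$, where the single cutoff $m_0$ already works by the fixed-$m$ convergence, and the finite block $n<N$, where uniformity is automatic since only finitely many summable sequences are involved. Nonnegativity (a) is used twice, once so that convergence of the finite head combines with (d) to give tail convergence, and once to guarantee monotonicity of the tails in the cutoff, so that the single $M=\max_n m_n$ suffices across all $n$.
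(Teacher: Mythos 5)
Your proof is correct and follows essentially the same route as the paper's: both arguments control the tail by writing $\sum_{j>m}p_j^{(n)}=\sum_{j\geq1}p_j^{(n)}-\sum_{j=1}^{m}p_j^{(n)}$ and combining (b), (c), (d), then split the index $n$ into a tail block handled by a single cutoff and a finite block handled term by term, with nonnegativity (a) supplying monotonicity of the tails in the cutoff. The only difference is presentational, in that you isolate the fixed-$m$ tail convergence as a preliminary step where the paper runs the same estimate inline with an $\varepsilon/3$ triangle inequality.
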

\begin{proof}
Fix an $\varepsilon>0$. We have to show that for $m\geq m(\varepsilon)$ we have $\sum_{j\geq m} p_j^{(n)}<\varepsilon$ for all $n\geq 1$.

By (c) we can choose $m_1=m_{1}(\varepsilon)$ such that $\sum_{j\geq m} p_j^{(0)}<\varepsilon/3$ for all $m\geq m_{1}$. 
Furthermore, by (b) we can choose a large enough $n_1=n_1(\varepsilon)$ such that for all $n\geq n_1$ we have $|\sum_{j=1}^{m_1}p_j^{(0)}-\sum_{j=1}^{m_1}p_j^{(n)}|<\varepsilon/3$. Next, by possibly further enlarging $n_1$ we deduce from (c) and (d) that $|\sum_{j\geq 1}p_j^{(n)}-\sum_{j\geq 1}p_j^{(0)}|<\varepsilon/3$.
Consequently, for $n\geq n_1$, we have
\begin{align*}
\sum_{j> m_1}p_j^{(n)}&=\sum_{j\geq 1}p_j^{(n)}-\sum_{j=1}^{m_1}p_j^{(n)}\\
&\leq |\sum_{j\geq 1}p_j^{(n)}-\sum_{j\geq 1}p_j^{(0)}|+|\sum_{j\geq 1}p_j^{(0)}-\sum_{j=1}^{m_1}p_j^{(0)}|+|\sum_{j=1}^{m_1}p_j^{(0)}-\sum_{j=1}^{m_1}p_j^{(n)}|\\
&<\varepsilon.
\end{align*}
Because of (a) this bound is still valid for all $m\geq m_1$.
For the $n_1$ just chosen, we can find an $m_2=m_2(\varepsilon)$, such that
$\sum_{j>m_2}p_j^{(n)}<\varepsilon$ for all $n\leq n_1$. This is because of (d) and (e) we know that $\sup_{n\geq 1}\sum_{j\geq 1}p_j^{(n)}<\infty$. And again, because of (a) we know also that $\sum_{j>m}p_j^{(n)}<\varepsilon$ for all $m\geq m_2$ and $n\leq n_1$. Hence, set $m(\varepsilon)=\max\{m_1,m_2\}$. 
\end{proof}

\begin{Lemma}\label{le4}
Take some ONB $(v_j)$ of $H$. Lemma~\ref{le6} applies with $p_j^{(n)}=\langle \mathcal{F}_{n;\theta}(v_j),v_j\rangle$, $p_j^{(0)}=\langle \mathcal{F}_{\theta}(v_j),v_j\rangle$ for all $\theta\in\widetilde M$.
\end{Lemma}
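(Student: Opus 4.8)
The plan is to verify the five hypotheses (a)--(e) of Lemma~\ref{le6} one by one for the sequences $p_j^{(n)}=\langle \mathcal{F}_{n;\theta}(v_j),v_j\rangle$ and $p_j^{(0)}=\langle \mathcal{F}_{\theta}(v_j),v_j\rangle$, with $\theta\in\widetilde M$ held fixed throughout. Each item is already supplied by a result proved earlier in this section, so the argument amounts to reading off the correct statement and, more importantly, checking that the relevant full-measure sets can all be taken to contain $\widetilde M$. Three of the conditions are immediate. Condition (a) holds because $\mathcal{F}_{n;\theta}$ and $\mathcal{F}_{\theta}$ are non-negative definite, so the diagonal quadratic forms $\langle \mathcal{F}_{n;\theta}(v_j),v_j\rangle$ and $\langle \mathcal{F}_{\theta}(v_j),v_j\rangle$ are non-negative. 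Condition (c) follows since $\sum_{j\geq1}p_j^{(0)}=\mathrm{tr}(\mathcal{F}_\theta)=E\|Z(\theta)\|^2$, which by result (i) of Section~\ref{1} is finite for $\theta\in M_0$; one then sets $p=\mathrm{tr}(\mathcal{F}_\theta)$. Condition (e) holds because $\sum_{j\geq1}p_j^{(n)}=\mathrm{tr}(\mathcal{F}_{n;\theta})=\frac{1}{2\pi n}E\|S_n(\theta)\|^2<\infty$ for each fixed $n$, since $S_n(\theta)$ is a finite sum of elements of $L^2_H(\Omega)$.

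The two remaining conditions carry the analytic content, but both are already available. Condition (b), the pointwise convergence $\lim_n p_j^{(n)}=p_j^{(0)}$, is exactly Proposition~\ref{prop1}(a) applied with $u=v_j$, which gives $\langle \mathcal{F}_{n;\theta}(v_j),v_j\rangle\to\langle \mathcal{F}_{\theta}(v_j),v_j\rangle$ for every $\theta\in\widetilde M$. Condition (d), the convergence of the total masses $\lim_n\sum_{j\geq1}p_j^{(n)}=p$, is the statement $\mathrm{tr}(\mathcal{F}_{n;\theta})\to\mathrm{tr}(\mathcal{F}_\theta)$, which is precisely Lemma~\ref{le1b}; it holds on $M_2$, and I would invoke the convention (already used in the proof of Proposition~\ref{prop1}) that $\widetilde M\subseteq M_2$.

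The only point requiring care is the set bookkeeping: all five items must hold simultaneously on the single set $\widetilde M$. This is harmless because $M_0$ and $M_2$ are each the complement of a Lebesgue-null set, so by enlarging the exceptional set $\widetilde N$ one may assume $\widetilde M\subseteq M_0\cap M_2$ while keeping $\widetilde M$ of full measure. I expect no genuine obstacle in establishing Lemma~\ref{le4} itself, since the substantive work — the pointwise limit (b) and the conservation of total trace (d), the latter resting on the Fej\'er--Lebesgue argument of Lemma~\ref{le1b} — has already been carried out. What (d) contributes, together with (b), is exactly the Scheff\'e-type ingredient that Lemma~\ref{le6} converts into the uniform tail bound $\lim_{m}\sup_n\sum_{j>m}p_j^{(n)}=0$; the role of the present lemma is merely to certify that this mechanism may be applied.
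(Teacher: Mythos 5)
Your proposal is correct and follows essentially the same route as the paper: it verifies conditions (a)--(e) of Lemma~\ref{le6} using non-negative definiteness for (a), Proposition~\ref{prop1}(a) for (b), nuclearity of $\mathcal{F}_\theta$ and $\mathcal{F}_{n;\theta}$ for (c) and (e), and Lemma~\ref{le1b} for (d), with the same convention that $\widetilde M$ may be taken inside $M_0\cap M_2$. The only cosmetic difference is that the paper additionally assumes $v_j\in H^\prime$, which is unnecessary given that Proposition~\ref{prop1} is already stated for all $u\in H$ on $\widetilde M$.
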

\begin{proof}
We can assume that $v_j\in H^\prime$ for all $j\geq 1$.
Relation (a) is trivial. Relation (b) follows from part {\bf (a)} of Proposition~\ref{prop1}. Relation (c) holds because $\mathcal{F}_{\theta}$ is nuclear on $\widetilde M$. And similarly relation (e) holds because apparently $\mathcal{F}_{n;\theta}$ is nuclear for any $n$. Finally note that (d) can be reformulated as 
$
\mathrm{tr}\big(\mathcal{F}_{n;\theta}\big)\to \mathrm{tr}\big(\mathcal{F}_{\theta}\big).
$
By Lemma~\ref{le1b} this holds for almost all $\theta\in M_2\subset\widetilde{M}$.
\end{proof}

\begin{Lemma}\label{prop2}
Under Assumptions~\ref{ass} the sequence $(S_n(\theta)/\sqrt{n}\colon n\geq 1)$ is tight for all $\theta\in\widetilde{M}$.
\end{Lemma}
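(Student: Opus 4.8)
The plan is to invoke the standard characterization of tightness for a sequence of random elements in a separable Hilbert space and to verify its two hypotheses using the estimates already assembled in Lemmas~\ref{le1b}, \ref{le6} and~\ref{le4}. Recall that a sequence $(Y_n)$ of $H$-valued random elements is tight provided that (i) $\sup_n E\|Y_n\|^2<\infty$ and (ii) for some orthonormal basis $(v_j)$ of $H$ one has $\lim_{m\to\infty}\sup_n\sum_{j>m}E|\langle Y_n,v_j\rangle|^2=0$. Indeed, Markov's inequality turns (i) into the uniform smallness of $P(\|Y_n\|>R)$ as $R\to\infty$ and turns (ii) into the uniform smallness of $P(\sum_{j>m}|\langle Y_n,v_j\rangle|^2>\delta)$ as $m\to\infty$, which is precisely the classical tightness criterion on a separable Hilbert space (see, e.g., \citet{bosq:2000}). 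I would apply this with $Y_n=S_n(\theta)/\sqrt{n}$ for a fixed $\theta\in\widetilde M$.

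For (i), I would note that, with $\otimes$ as defined in Section~\ref{se:main}, one has $E\|Y_n\|^2=\frac{1}{n}E\|S_n(\theta)\|^2=\mathrm{tr}\big(\tfrac{1}{n}E[S_n(\theta)\otimes S_n(\theta)]\big)=2\pi\,\mathrm{tr}(\mathcal{F}_{n;\theta})$. Since $\theta\in\widetilde M$ and we may assume $M_2\subset\widetilde M$, Lemma~\ref{le1b} gives $\mathrm{tr}(\mathcal{F}_{n;\theta})\to\mathrm{tr}(\mathcal{F}_\theta)<\infty$, so the sequence $\big(E\|Y_n\|^2\big)_n$ is bounded; this establishes (i).

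For (ii), I would fix an orthonormal basis $(v_j)$ with $v_j\in H^\prime$ as in Lemma~\ref{le4}. A direct computation with the definition of $\otimes$ in the complex setting yields $E|\langle Y_n,v_j\rangle|^2=\frac{1}{n}E|\langle S_n(\theta),v_j\rangle|^2=2\pi\langle\mathcal{F}_{n;\theta}(v_j),v_j\rangle=2\pi\,p_j^{(n)}$, where $p_j^{(n)}$ is exactly the array of Lemma~\ref{le4}. Consequently $\sum_{j>m}E|\langle Y_n,v_j\rangle|^2=2\pi\sum_{j>m}p_j^{(n)}$, and Lemma~\ref{le4}, which checks that the hypotheses of Lemma~\ref{le6} are met on $\widetilde M$, furnishes $\lim_{m\to\infty}\sup_n\sum_{j>m}p_j^{(n)}=0$. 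This is precisely condition (ii). Combining (i) and (ii) yields tightness of $(S_n(\theta)/\sqrt{n})$ on $\widetilde M$.

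The substantive content is not in this lemma itself but in property (ii): pointwise-in-$j$ convergence $p_j^{(n)}\to p_j^{(0)}$ together with convergence of the totals $\sum_j p_j^{(n)}\to\sum_j p_j^{(0)}$ does not by itself control the coordinate tails uniformly in $n$. The step I expect to be the crux is therefore the uniform tail bound of Lemma~\ref{le6}, an $\ell^1$ analogue of Scheff\'e's lemma, which upgrades the weak-operator convergence $\mathcal{F}_{n;\theta}\stackrel{\mathrm{w}}{\longrightarrow}\mathcal{F}_\theta$ together with the trace convergence of Lemma~\ref{le1b} to the equi-smallness of the tails required here. Once that uniform tail control is in hand, the remainder is a routine application of Markov's inequality and the Hilbert-space tightness criterion.
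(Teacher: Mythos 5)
Your proposal is correct and follows essentially the same route as the paper: the paper also reduces tightness to the uniform coordinate-tail bound $\lim_{m\to\infty}\sup_n\sum_{j>m}\langle\mathcal{F}_{n;\theta}(v_j),v_j\rangle=0$ supplied by Lemmas~\ref{le6} and~\ref{le4}, combined with Markov's inequality. The only cosmetic difference is that the paper constructs the compact set $K=\bigcap_k\{x:\sum_{j>N_k}|\langle v_j,x\rangle|^2\le 1/l_k\}$ explicitly (following \citet{bosq:2000}) instead of citing the Hilbert-space tightness criterion as a black box.
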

\begin{proof}
Let $\varepsilon >0$. We consider the sequences $0< l_k \nearrow \infty$ and  $0< N_k \nearrow \infty$ and define
$$
K = \bigcap_{k=1}^{\infty} \left\lbrace  x\in H: \sum_{j> N_k} \vert\langle v_j,x \rangle\vert^2 \leq \frac{1}{l_k}  \right\rbrace.
$$
Just as in \cite{bosq:2000} (p.\ 52) we can see that it is a compact subset of $H$. We now have that
\begin{align*}
P \left(S_n(\theta)/\sqrt{n} \in K \right) &\geq 1- \sum_{k=1}^{\infty} l_k 
\sum_{j>N_k} E\big\vert\langle S_{n}(\theta)/\sqrt{n},v_j \rangle\big\vert^2 \\
&= 1- \sum_{k=1}^{\infty} 2\pi l_k \sum_{j>N_k}  \langle  \mathcal{F}_{n;\theta}(v_j),v_j \rangle,
\end{align*}
where we used the $\sigma$-subadditivity and the Markov inequality. By Lemma \ref{le4} we know that
$$
\sup_n\sum_{j\geq m}\langle  \mathcal{F}_{n;\theta}(v_j),v_j \rangle\to 0\quad(m\to\infty).
$$
Therefore, for any $\varepsilon>0$, we can choose increasing sequences $(l_k)$ and $(N_k)$ such that $$  2\pi l_k \sum_{j>N_k}  \langle  \mathcal{F}_{n;\theta}(v_j),v_j \rangle\leq \varepsilon 2^{-k}.$$
\end{proof}

\subsection{Proofs of Theorems~\ref{main} and \ref{main2}}

\begin{proof}[Proof of Theorem~\ref{main}]
Parts {\bf (II)} and {\bf (III)} of Theorem~\ref{main} follow directly from Lemmas~\ref{le1b} and \ref{le5}. Part {\bf (I)} can be deduced from the polarization identity for self-adjoint operators $\Gamma$
\begin{align*}
\langle \Gamma (x),y \rangle &= \frac{1}{4}\big[ \langle \Gamma (x+y),x+y \rangle -\langle \Gamma (x-y),x-y \rangle\\
&\qquad +\ii \langle \Gamma (x+\ii y),x+\ii y \rangle -\ii \langle \Gamma (x-\ii y),x-\ii y \rangle \big],
\end{align*}
and part {\bf (a)} of Proposition~\ref{prop1}. Next, the asymptotic normality of $S_n(\theta)\sqrt{n}$ for all $\theta\in\widetilde{M}$ follows from the corresponding convergence of the projections (Proposition~\ref{prop1}, part {\bf (b)}) and the tightness shown in Lemma~\ref{prop2}. Finally, the asymptotic independence relation {\bf (IV)} can be obtained by verifying it for the projections $\langle S_n(\theta),u\rangle/\sqrt{n}$ and $\langle S_n(\theta^\prime),u\rangle/\sqrt{n}$. For this we can refer to~\cite{wu:2010}.
\end{proof}

\begin{proof}[Proof of Theorem~\ref{main2}]
Let $\theta \in [-\pi,\pi]$ be such that Assumption~\ref{ass2} is satisfied. Due to relation~\eqref{repSn} we have that
\begin{align*}\frac{E\Vert S_n(\theta)\Vert^2}{n} &=\frac{2\pi}{n}\,\sum_{k=1}^n E\Vert Z^{(0)}_{n-k}(\theta)\Vert^2 + \frac{1}{n}\, E\big\|E[S_n(\theta)\vert \mathcal{G}_0]\big\|^2 \\
&= 2\pi \, E\Vert Z^{(0)}(\theta)\Vert^2 \; + \; o(1),
\end{align*}
where we used both \textbf{(A1)} and \textbf{(A2)}. Since $\mathrm{tr}(\mathcal{F}_{\theta})=E\Vert Z^{(0)}(\theta)\Vert^2$ we conclude that {\bf (II)} holds for the fixed $\theta$. Note that condition~\textbf{(A1)} just provides a stronger version of Lemma~\ref{le1}. There we only get the limit of $Z_n$ ($=Z_n^{(0)}$) in $L_G^2([-\pi,\pi])$, whereas now we get it pointwise by Assumption {\bf(A1)}. In other words, we got rid of the exceptional sets $N_1$ and $N_2$ in Lemmas~\ref{le1} and~\ref{le1b}. The proof of Lemma~\ref{le5} is unchanged, hence {\bf (III)} follows. \medskip

Proposition~\ref{prop1} can be proven even more easily for a given $\theta \in [-\pi,\pi]$. For part \textbf{(a)} we project \eqref{repSn} onto $u\in H$ and deduce just as above that
\[\frac{E\vert \langle S_n(\theta),u\rangle \vert^2}{n} = 2\pi \, E\vert \langle Z^{(0)}(\theta),u\rangle\vert^2 \; + \; o(1).\]
This also shows {\bf (I)} by the polarization identity.  Part \textbf{(b)} can be shown by the same martingale approximation as in the proof of Theorem~2.1. of~\cite{wu:2010}. Alternatively, one may directly apply Theorem~2 in~\cite{wu:2004}.

\medskip

Finally the tightness of $S_n(\theta)/\sqrt{n}$ can be shown using the same proof as in Lemma~\ref{prop2} since it is now clear that relations (a)-(e) of Lemma~\ref{le6} are satisfied for the particular $\theta$.
\end{proof}

\section*{Acknowledgement}
This research was supported by the
Communaut\'e fran\c{c}aise de Belgique---Actions de Recherche
Concert\'ees (2010--2015) and from the Interuniversity Attraction Poles Programme (IAP-network P7/06), Belgian Science Policy Office. Cl\'ement Cerovecki acknowledges support from the F.R.S.-FNRS Fond de la Recherche Scientifique, Rue d'Egmont 5, B-1000 Bruxelles.

\bibliographystyle{plainnat}

\begin{thebibliography}{10}

\bibitem[Aue et al.(2015)]{aue:dubart:hormann:2015} {\sc Aue, A., Dubart Norinho, D., and H\"ormann, S.} (2015). {\it On the prediction of stationary functional time series.}
Journal of the American Statistical Association, 110, 509, 378--392.


\bibitem[Berkes et al.(2013)]{berkes:horvath:rice:2013} {\sc Berkes, I., Horv\'ath, L. and Rice, G.} (2013). {\it Weak invariance principles for sums of dependent
random functions.}  Stochastic Processes and their Applications, 123, 385--403. 

\bibitem[Bosq(2000)]{bosq:2000} {\sc Bosq, D.} (2000). {\it Linear Processes in Function Spaces}.
Springer, New York.

\bibitem[Brockwell and Davis(1991)]{brockwell:davis:1991} {\sc Brockwell, P.J. and Davis, R.A.} (1991). Time Series: Theory and Methods. Springer, New York.

\bibitem[Bradley(2005)]{bradley:2005} {\sc Bradley, R.C.} (2005). {\it Basic Properties of Strong Mixing
Conditions. A Survey and Some Open Questions}. Probability Surveys Vol. 2, 107--144.

\bibitem[Chatterji(1964)]{chatterji:1964} {\sc Chatterji, S.D.} (1964). {\it A note on the convergence of Banach-space valued martingales}.
Mathematische Annalen, Volume 153, Issue 2, pp 142--149.

\bibitem[H\"ormann et al.(2015a)]{hoermann:kidzinski:hallin:2014} {\sc H\"ormann, S., Kidzi\'nski, L. and Hallin, M.} (2015). {\it Dynamic Functional Principal Component.} Journal of the Royal Statistical Society: Series B, 77, 319--348.

\bibitem[H\"ormann et al.(2015b)]{hoermann:kidzinski:kokoszka:2014} {\sc H\"ormann, S., Kidzi\'nski, L. and Kokoszka, P.} 
 {\it Estimation in functional lagged regression.} Journal of Time Series Analysis. (Forthcoming. DOI: 10.1111/jtsa.12114.)

\bibitem[H\"ormann and Kokoszka(2010)]{hormann:kokoszka:2010} {\sc H\"ormann, S., Kokoszka, P.} (2010). {\it Weakly dependent functional data.} The Annals of Statistics, 38, 1845--1884.

\bibitem[Horv\'ath et al.(2014)]{horvath:kokoszka:rice:2014} {\sc Horv\'ath, L., Kokoszka, P., and Rice, G.} (2014). {\it Testing stationarity of functional time series.} Journal of Econometrics, 179, 66--82.


\bibitem[Horv\'ath et al.(2015)]{horvath:rice:whipple:2015} {\sc Horv\'ath, L., Rice, G., and Whipple, S.} (2014). {\it Adaptive bandwidth selection in the estimation of the long run covariance of functional time series.} Computational Statistics and Data Analysis. Forthcoming.

\bibitem[Hyndman and Shang(2009)]{hyndman:shang:2009} {\sc Hyndman, R.J. and Shang, H.L.} (2009).  {\it Forecasting functional time series.}
Journal of the Korean Statistical Society, 38, No. 3, 199--211.


\bibitem[Jirak(2013)]{jirak:2013} {\sc Jirak, M.} (2013). {\it On weak invariance principles for sums of dependent random functionals.} Statistics and Probability Letters, 83, 2291--2296.

\bibitem[Merlev\`ede et al.(1997)]{merlevede:peligrad:utev:1997} {\sc Merlev\`ede, F., Peligrad, M. and Utev, S.} (1997).
{\it  Sharp Conditions for the CLT of Linear Processes in
a Hilbert Space.} Journal of Theoretical Probability, 10, No. 3, 681--693.

\bibitem[Mikusi\'nski(1978)]{mikusinski} {\sc Mikusi\'nski, J.} (1978). {\it The Bochner Integral}. Birkhauser, Basel.

\bibitem[Panaretos and Tavakoli(2013a)]{panaretos:tavakoli:2013a} {\sc Panaretos, V.M. and Tavakoli, S.} (2013). {\it Fourier analysis of stationonary time series in function spaces.} The Annals of Statistics, 41, No.~2, 568--603.

\bibitem[Panaretos and Tavakoli(2013b)]{panaretos:tavakoli:2013b}
{\sc Panaretos, V.M. and Tavakoli, S.} (2013). {\it Cram{\'e}r--{K}arhunen--{L}o{\`e}ve representation and harmonic
  principal component analysis of functional time series.} Stochastic Processes and their Applications, 123, 2779--2807.
  
\bibitem[Peligrad and Wu(2010)]{wu:2010} {\sc Peligrad, M. and Wu, W.B.} (2010). {\it Central limit theorem for Fourier transforms of stationary processes.} The Annals of Probability, 38, No. 5, 2009--2022.

\bibitem[P\"otscher and Prucha(1997)]{poetscher:prucha:1997} {\sc P\"otscher, B.M. and Prucha, I.R.} (1997). {\it Dynamic Nonlinear Econometric Models.} Springer.

\bibitem[Ra\v{c}kauskas and Suquet(2010)]{rackauskas:suquet:2010} {\sc Ra\v{c}kauskas, A. and Suquet, Ch.} (2010). {\it On limit theorems for Banach-space-valued linear processes.} Lithuanian Mathematical Journal, 50, No.~1, 71--87.

\bibitem[Walker(1965)]{walker:1965} {\sc Walker, A.M.} (1965). {\it Some asymptotic results for the periodogram of a stationary time series.} Journal of the Australian Mathematical Society, 5, No.\ 1, 107--128.

\bibitem[Wu(2004)]{wu:2004} {\sc Wu, W.B.} (2004). {\it Fourier transforms of stationary processes.} Proceeding of the American society, Vol 133, Number 1, Pages 285--293.


\end{thebibliography}

\end{document}